\newtheorem{deff}{Definition}[section]
\newtheorem{lemma}[deff]{Lemma}
\newtheorem{theorem}[deff]{Theorem}
\newtheorem{corollary}[deff]{Corollary}
\newtheorem{proposition}[deff]{Proposition}
\newtheorem{em-example}[deff]{Example}
\newtheorem{em-examples}[deff]{Examples}
\newtheorem{em-def}[deff]{Definition}        
\newtheorem{em-remark}[deff]{Remark}         
\newtheorem{em-remarks}[deff]{Remarks}
\newtheorem{em-question}[deff]{Question}
\newtheorem{em-openquestion}[deff]{Open question}
\newtheorem{claim}[deff]{Claim}
\newenvironment{definition}{\begin{em-def} \em  }{ \end{em-def}}
\newenvironment{openquestion}{\begin{em-openquestion} \em }{\end{em-openquestion}}
\newenvironment{remark}{\begin{em-remark} \em }{\end{em-remark}}
\newenvironment{proof}{\noindent {\it Proof}:}{\QED \smallskip}
\newcommand{\dis}{\displaystyle}
\newcommand{\D}{{\rm d}}
\newcommand{\tr}{^{\triangleright}}
\newcommand\QED{\hfill QED \medskip}
\newcommand{\abs}[1]{\left| #1 \right|}
\newcommand{\Graev}[1]{\mathcal{T}_{\{#1\}}}
\def\CHom{\mathop{\rm CHom}}
\def\:{\nobreak \hskip .1111em\mathpunct {}\nonscript \mkern
-\thinmuskip {:}\hskip .3333emplus.0555em\relax}
\def\T{{\mathbb T}}
\def\Q{{\mathbb Q}}
\def\Z{{\mathbb Z}}
\def\N{{\mathbb N}}
\def\R{{\mathbb R}}
\def\Q{{\mathbb Q}}
\def\G{\mathcal{G}}
\def\taub{\tau_\mathbf{b}}
\def\lambdab{\lambda_\mathbf{b}}
\def\deltab{\delta_\b}
\def\b{\mathbf{b}}
\def\Zbinfty{\Z(\b^\infty)}
\def\Gammap{\Gamma_\mathbf{p}}
\def\Gammab{\Gamma_\b}
\def\tcu{topology of uniform convergence }
\def\ro{\rho}
\def\Dlinfty{{\mathcal D}_\infty^\ell}
\def\D{\mathcal{D}}
\def\Dinfty{\mathcal{D}_\infty}
\def\LQC{\mathcal{L}qc}
\def\MAP{\mathcal{MAP}}
\def\B
\def\tauc{\tau_\mathbf{c}}
\title{Non metrizable topologies on $\Z$ with countable dual group.}
\author{ D. de la Barrera Mayoral\footnote{ORCID: 0000-0002-0024-5265. e-mail: dbarrera@mat.ucm.es.  Partially supported by Ministerio de Econom\'ia y Competitividad grant: MTM2013-42486-P.}}
\begin{document}

\maketitle

\begin{abstract}
In this paper we give two families of non-metrizable topologies on the group of the integers having a countable dual group which is isomorphic to a infinite torsion subgroup of the unit circle in the complex plane. Both families are related to $D$-sequences, which are sequences of natural numbers such that each term divides the following. The first family consists of locally quasi-convex group topologies. The second consists of complete topologies which are not locally quasi-convex. In order to study the dual groups for both families we need to make numerical considerations of independent interest.

\end{abstract}

\noindent\textbf{Keywords:} Locally quasi-convex topology, $D$-sequence, continuous character, infinite torsion subgroups of $\T$.

\noindent\textbf{MSC Classification: }Primary: 22A05, 55M05. Secondary: 20K45.

\section{Introduction and Notation.}

The most important class of topological abelian groups are the locally compact groups. In this class, the most interesting theorems can be proved. In this paper we consider a wider class of groups, which is the class of locally quasi-convex groups. Clearly, locally compact groups are locally quasi-convex, since they can be considered as a dual group (in fact, a locally compact abelian group is isomorphic to its bidual group) and dual groups are locally quasi-convex.

This paper is part of a project about duality, which intends to solve, among others, the Mackey Problem (first stated in \cite{ChaMarTar1999}). This problem asks whether there exists a maximum element in the family of all locally quasi-convex compatible topologies (see Definition \ref{Def-G-Mackey}). 

We present two different kinds of topologies on the group of the integers, having as common feature that topologies in both families have countable dual group, which is isomorphic to an infinite torsion subgroup of the unit circle in the complex plane:

 On the one hand, the topologies presented in Section \ref{SeccionGammaB} are metrizable, non-complete and locally quasi-convex and the results contained are the natural further steps of the ones given in \cite{AusBar2012}.

 On the other hand, the topologies presented in Section \ref{SeccionGraev} are complete, non-metrizable but they are not locally quasi-convex. These topologies, introduced by Graev, were thoroughly studied by Protasov and Zelenyuk. Precisely, in \cite{ProZel1999}, a neighborhood basis for this topology is given and this allows us to study these topologies on $\Z$ from a numerical point of view. The fact that these topologies have countable dual group is surprising due to the belief that the more open set the topology has, the more continuous characters it should have.

All the groups considered will be abelian and the notation will be additive.

In order to state properly the Mackey Problem, we need first to recall some notation on duality:

\begin{definition}

We will consider the unit circle as $$\T=\R/\Z=\left\{x+\Z:x\in\left(-\frac{1}{2},\frac{1}{2}\right]\right\}.$$
 We will consider in $\T$ the following neighborhood basis: $$\T_m:=\left\{x+\Z:x\in\left[-\frac{1}{4m},\frac{1}{4m}\right]\right\}.$$
 We set $$\T_+:=\left\{x+\Z:x\in\left[-\frac{1}{4},\frac{1}{4}\right]\right\}=\T_1.$$

\end{definition}

\begin{definition}

Let $(G,\tau)$ be a topological group. Denote by $G^\wedge$ the set of continuous homomorphisms (or continuous characters) $\CHom(G,\T)$. This set is a group when we consider the operation $(f + g)(x)= f(x) + g(x)$. In addition, if we consider the compact open topology on $G^\wedge$ it is a topological group. The topological group $G^\wedge$ is called the \textbf{dual group of $G$.}

\end{definition}

\begin{definition}

The natural embedding $$\alpha_G:G\rightarrow G^{\wedge\wedge},$$\index{$\alpha_G$} is defined by $$x\mapsto\alpha_G(x):G^\wedge\rightarrow\T,$$ where $$\chi\mapsto\chi(x).$$

\end{definition}

\begin{proposition}

$\alpha_G$ is a homomorphism.

\end{proposition}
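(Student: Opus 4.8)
The plan is to unwind the definitions and reduce the claim to the fact that each fixed character $\chi \in G^\wedge$ is itself a homomorphism. Recall that the group operation on the double dual $G^{\wedge\wedge} = (G^\wedge)^\wedge$ is the pointwise one inherited from $\T$: for $\Phi, \Psi \in G^{\wedge\wedge}$ and $\chi \in G^\wedge$ one has $(\Phi + \Psi)(\chi) = \Phi(\chi) + \Psi(\chi)$. Hence, to prove that $\alpha_G(x+y) = \alpha_G(x) + \alpha_G(y)$ as elements of $G^{\wedge\wedge}$, it suffices to check that the two sides agree as functions on $G^\wedge$, i.e. that they take the same value on every $\chi \in G^\wedge$.

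First I would fix $x, y \in G$ and an arbitrary $\chi \in G^\wedge$, and evaluate the left-hand side directly from the definition of $\alpha_G$:
$$\alpha_G(x+y)(\chi) = \chi(x+y).$$
Since $\chi$ is a character, hence a homomorphism $G \to \T$, this equals $\chi(x) + \chi(y)$. Next I would expand the right-hand side using the pointwise operation on $G^{\wedge\wedge}$:
$$\bigl(\alpha_G(x) + \alpha_G(y)\bigr)(\chi) = \alpha_G(x)(\chi) + \alpha_G(y)(\chi) = \chi(x) + \chi(y).$$
As $\chi \in G^\wedge$ was arbitrary, the two functions coincide, which gives $\alpha_G(x+y) = \alpha_G(x) + \alpha_G(y)$, the desired homomorphism property.

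For completeness I would also note, before or alongside the main computation, that each $\alpha_G(x)$ genuinely lands in $G^{\wedge\wedge}$: the evaluation map is a homomorphism on $G^\wedge$, since $\alpha_G(x)(\chi_1 + \chi_2) = (\chi_1+\chi_2)(x) = \chi_1(x) + \chi_2(x) = \alpha_G(x)(\chi_1) + \alpha_G(x)(\chi_2)$, and it is continuous for the compact-open topology because point evaluations are continuous. This verification is routine and poses no real difficulty; indeed there is no serious obstacle in this proposition, as the whole argument is a formal consequence of the pointwise definition of addition in the dual and double dual together with the homomorphism property of characters. The only point requiring a moment's care is keeping straight which layer of duality each operation lives on, so that the symbol $+$ is interpreted correctly at each step.
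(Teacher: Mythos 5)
Your proof is correct and follows essentially the same route as the paper: evaluate both sides at an arbitrary $\chi \in G^\wedge$, use that $\chi$ is a homomorphism, and conclude pointwise equality (the paper writes the identical computation, merely in multiplicative notation). Your additional check that $\alpha_G(x)$ actually lies in $G^{\wedge\wedge}$ is a harmless extra that the paper omits.
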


\begin{proof}

$\alpha_G(x+y)(\chi)=\chi(x+y)=\chi(x)\chi(y)=\alpha_G(x)(\chi)\alpha_G(y)(\chi)= (\alpha_G(x)\alpha_G(y))(\chi)$, for all $\chi\in G^\wedge$.

Thus, $\alpha_G(x+y)=\alpha_G(x)\alpha_G(y)$

\end{proof}

\begin{definition}

A topological group $(G,\tau)$ is {\bf reflexive} if $\alpha_G$ is a topological isomorphism.

\end{definition}

\begin{definition}

Let $\tau,\nu$ be two group topologies on a group $G$. We say that $\tau$ and $\nu$ are {\bf compatible} if algebraically $(G,\tau)^\wedge=(G,\nu)^\wedge$.

\end{definition}

Now we include the definition of quasi-convex set, which emulates the Hahn-Banach Theorem from Functional Analysis:

\begin{definition}\cite{Vil51}

Let $(G,\tau)$ be a topological group and let $M\subset G$ be a subset. We say that $M$ is {\bf quasi-convex} if for every $x\in G\setminus M$, there exists $\chi \in G^\wedge$ such that $\chi(M)\subset \T_+$ and $\chi(x)\notin\T_+$. We say that $\tau$ is {\bf locally quasi-convex} if there exists a neighborhood basis for $\tau$ consisting of quasi-convex subsets. We denote by {\bf $\LQC$} the class of all locally quasi-convex topological groups.

\end{definition}

\begin{definition}

Let $\tau$ be a group topology. We define the locally quasi-convex modification of $\tau$ as the finest locally quasi-convex group topology $(\tau)_{lqc}$ among all those satisfying that they are coarser than $\tau$.

\end{definition}

For a deeper insight in the locally quasi-convex modification of a topological group, see \cite{DiaMar2014}.

\begin{lemma}

Let $(G,\tau)$ be a topological group. Then $(G,\tau)^\wedge=(G,(\tau)_{lqc})^\wedge$.

\end{lemma}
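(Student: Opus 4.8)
The plan is to prove the two inclusions of $(G,\tau)^\wedge=(G,(\tau)_{lqc})^\wedge$ separately, the trivial one being immediate and the substance lying in the reverse. Since $(\tau)_{lqc}$ is by definition coarser than $\tau$, every $(\tau)_{lqc}$-continuous character is \emph{a fortiori} $\tau$-continuous, so $(G,(\tau)_{lqc})^\wedge\subseteq (G,\tau)^\wedge$; this direction uses nothing beyond the monotonicity of the dual under refinement of the topology. The whole point is therefore to establish $(G,\tau)^\wedge\subseteq (G,(\tau)_{lqc})^\wedge$.

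For the reverse inclusion I would fix $\chi\in (G,\tau)^\wedge$ and show that $\chi$ is continuous for $(\tau)_{lqc}$. Since $\{\T_m\}$ is a neighborhood basis at $0$ in $\T$, it suffices to check that $\chi^{-1}(\T_m)$ is a $(\tau)_{lqc}$-neighborhood of $0$ for each $m$. The key observation is that each $\T_m$ is quasi-convex in $\T$ (with separating characters the integer multiples $t\mapsto kt$), whence the preimage $\chi^{-1}(\T_m)$ is quasi-convex in $(G,\tau)$: given $x\notin\chi^{-1}(\T_m)$ one has $\chi(x)\notin\T_m$, so a character $\psi$ of $\T$ with $\psi(\T_m)\subseteq\T_+$ and $\psi(\chi(x))\notin\T_+$ produces $\psi\circ\chi\in(G,\tau)^\wedge$ separating $x$ from $\chi^{-1}(\T_m)$ in the sense of the definition of quasi-convexity. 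Moreover $\chi^{-1}(\T_m)$ is a $\tau$-neighborhood of $0$ because $\chi$ is $\tau$-continuous.

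It then remains to link quasi-convex $\tau$-neighborhoods to the topology $(\tau)_{lqc}$. Here I would invoke the explicit description of the locally quasi-convex modification: a neighborhood basis of $(\tau)_{lqc}$ at $0$ is furnished by the quasi-convex hulls $Q_\tau(U)=\bigcap\{\chi^{-1}(\T_+):\chi\in U\tr\}$ of the $\tau$-neighborhoods $U$ of $0$, the polar $U\tr$ being computed inside $(G,\tau)^\wedge$. Because $\chi^{-1}(\T_m)$ is already quasi-convex, it coincides with its own quasi-convex hull and hence is a $(\tau)_{lqc}$-neighborhood of $0$. As this holds for every $m$, the character $\chi$ is $(\tau)_{lqc}$-continuous, which yields the desired inclusion and, combined with the first one, the equality.

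The main obstacle is precisely this last step, namely justifying that the quasi-convex hulls of $\tau$-neighborhoods form a neighborhood basis for $(\tau)_{lqc}$ — equivalently, that this prescription defines a group topology that is locally quasi-convex and is the finest such below $\tau$. An alternative route that sidesteps the construction is to introduce the weak topology $\sigma$ on $G$ induced by all of $(G,\tau)^\wedge$: it is locally quasi-convex and coarser than $\tau$, so by the maximality defining $(\tau)_{lqc}$ one gets $\sigma\le(\tau)_{lqc}\le\tau$, and the standard identification $(G,\sigma)^\wedge=(G,\tau)^\wedge$ squeezes the three duals together. Either way, the crux is the interplay between the quasi-convexity of the model sets $\T_m$ in $\T$ and the maximality property that characterizes $(\tau)_{lqc}$.
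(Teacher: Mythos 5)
Your proposal is correct, but note that the paper itself gives no proof of this lemma: it is stated as known, with \cite{DiaMar2014} cited immediately beforehand for the theory of the locally quasi-convex modification, so there is no proof in the paper to compare against and your argument must stand on its own. Of your two routes, the second one (the weak-topology squeeze) is the one that genuinely closes the argument, and it is in fact cleaner than you suggest: with $\sigma$ the initial topology on $G$ induced by the family $(G,\tau)^\wedge$, the identification $(G,\sigma)^\wedge=(G,\tau)^\wedge$ needs no Comfort--Ross-type theorem, since $(G,\sigma)^\wedge\subseteq(G,\tau)^\wedge$ is mere monotonicity of the dual (because $\sigma\le\tau$), while $(G,\tau)^\wedge\subseteq(G,\sigma)^\wedge$ holds by the very definition of an initial topology. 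The only substantive ingredient is that $\sigma$ is locally quasi-convex; this follows from your observation that each $\chi^{-1}(\T_m)$ is quasi-convex (the separating characters $\psi\circ\chi$ being $\sigma$-continuous), together with the standard fact that finite intersections of quasi-convex sets are quasi-convex. Incidentally, the quasi-convexity of $\T_m$ in $\T$ that you invoke is exactly Lemma \ref{lema1} of the paper in the case $n=1$. With $\sigma$ locally quasi-convex and coarser than $\tau$, the maximality defining $(\tau)_{lqc}$ gives $\sigma\le(\tau)_{lqc}\le\tau$, and the duals are squeezed as you say. Your first route is also sound, but only modulo the step you flag yourself --- that the quasi-convex hulls of $\tau$-neighborhoods form a neighborhood basis of $(\tau)_{lqc}$ --- and that is precisely the nontrivial construction carried out in \cite{DiaMar2014}; as written, that route outsources essentially the whole content of the lemma, so the squeeze argument is the one that should be regarded as your proof.
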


\begin{definition}

Let $(G,\tau)$ a topological group. The \textbf{polar} of a subset $S\subset
G$ is defined as $$S\tr: =\{\chi\in G^\wedge\mid\chi(S)\subset\T_+\}.$$

\end{definition}

\begin{remark}

It is clear that $$(G,\tau)^\wedge=\bigcup_{V\in\mathcal{B}}V\tr,$$ where $\mathcal{B}$ is a neighborhood basis of $0$ for the topology $\tau$.

\end{remark}

\begin{definition}\cite{DikMarTar2014}\label{Def-G-Mackey}

Let $\G$ be a class of abelian topological groups and let $(G, \tau)$ be a topological group in $\G$. Let $\mathcal{C}_\G(G,\tau)$ denote the family of all $\G$-topologies $\nu$ on $G$ compatible with $\tau$.
We say that $\mu\in \mathcal{C}_\G(G,\tau)$ is the {\bf $\G$-Mackey topology} for $(G,G^\wedge)$ (or the {\bf Mackey topology for $(G,\tau)$ in $\G$}) if $\nu\leq\mu$ for all $\nu\in \mathcal{C}_\G(G,\tau)$.

If $\G$ is the class of locally quasi-convex groups $\LQC$, we will simply say that the topology is Mackey.

\end{definition}

The problem whether the Mackey Topology for a topological group $(G,\tau)$ exists was first posed in \cite{ChaMarTar1999}. This problem is called the Mackey Problem. Along the last years several mathematicians have tried to solve the Mackey Problem: de Leo \cite{TesisLorenzo}, Aussenhofer, Dikranjan, Mart\'in-Peinador \cite{ADM1}, D\'iaz-Nieto \cite{DiaMar2014}, Gabriyelyan \cite{DikGabTar2014}. A survey including similarities and differences between the Mackey topology in spaces and the Mackey Problem for groups has recently appeared \cite{MarTar2015}. In order to solve the Mackey Problem (in the negative) we try to find all the locally quasi-convex group topologies which are compatible with a non-discrete linear topology on $\Z$ and study if the supremum of all these topologies is again compatible. So far, it is not known if the supremum of compatible topologies is again compatible.

Now we give the basic definitions on $D$-sequences, which are sequences of natural numbers that encode the relevant information of non-discrete linear topologies on $\Z$.

\begin{definition}\index{$D$-sequence}

A sequence of natural numbers $\b=(b_n)_{n\in\N_0}\in\N^{\N_0}$ is called a \textbf{$D$-sequence} if it satisfies:

\begin{itemize}

\item[(1)] $b_0 = 1$,

\item[(2)] $b_n\neq b_{n+1}$ for all $n\in\N_0$,

\item[(3)] $b_n$ divides $b_{n+1}$ for all $n\in\N_0$.

\end{itemize}

The following notation will be used in the sequel:

\begin{itemize}

\item $\D:=\{\b : \b \mbox{ is a } D-\mbox{sequence}\}$.

\item $\Dinfty:=\{\b\in\mathcal{D}:\frac{b_{n+1}}{b_n}\rightarrow\infty\}$.

\item $\Dlinfty:=\{\b\in\mathcal{D}: \frac{b_{n+\ell}}{b_n}\rightarrow\infty\}$.
\item $\Dinfty(\b):=\{\mathbf{c}: \mathbf{c} \mbox{ is a subsequence of }\b \mbox{ and }\mathbf{c}\in\Dinfty\}$.

\item $\Dlinfty(\b):=\{\mathbf{c}: \mathbf{c} \mbox{ is a subsequence of }\b \mbox{ and }\mathbf{c}\in\Dlinfty\}$.

\end{itemize}
\end{definition}

Our interest on $D$-sequences stems from the fact that several compatible topologies on $\Z$ can be associated to $D$-sequences. This family of topologies is interesting due to the fact that we don't know in general, if the supremum of compatible topologies is again compatible. This is the main difference between the study of the Mackey Topology in spaces and the Mackey Problem for groups.

\begin{definition}\label{DefinitionQn}

Let $\b$ be a $D$-sequence. Define $(q_n)_{n\in\N}$ by $q_n:=\frac{b_n}{b_{n-1}}$.

\end{definition}

\begin{definition}

Let $\b$ be a $D$-sequence. We say that
\begin{itemize}

\item[(a)] $\b$ has \textbf{bounded ratios} if there exists a natural number $N$, satisfying that $q_n=\frac{b_n}{b_{n-1}}\leq N$.

\item[(b)] $\b$ is \textbf{basic} if $q_n$ is a prime number for all $n\in\N$.

\end{itemize}

\begin{definition}

Let $\b$ be a $D$-sequence. We write $$\Z(b_n):=\left\{\frac{k}{b_n}+\Z:k=0,1,\dots,b_n-1\right\}\mbox{ and }\Zbinfty:=\bigcup_{n\in\N_0}\Z(b_n).$$
As quotient groups, we can write $$\Z(b_n)=\left\langle\left\{ \frac{1}{b_n}\right\}\right\rangle/\Z\mbox{ and }\Zbinfty=\left\langle \left\{\frac{1}{b_m}:m\in\N_0\right\}\right\rangle/\Z.$$\index{$\Zbinfty$}
In other words, $\Z(b_n)$ consists of the elements whose order divides $b_n$ and $\Zbinfty$ is generated by the elements having order $b_n$ for some natural number $n\in\N_0$.

\end{definition}

\end{definition}

\begin{proposition}\label{bn}

Let \textbf{b} be a $D$-sequence. Suppose that $q_{j+1}\neq 2$ for infinitely many $j$. For each integer number $L\in\Z$, there exists a natural number $N=N(L)$ and \textbf{unique} integers $k_0,\dots ,k_{N}$, such that:

\begin{enumerate}

\item[(1)] $\dis L=\sum_{j=0}^{N}k_jb_j$.

\item[(2)] $\dis \left|\sum_{j=0}^{n}k_jb_j\right|\leq\frac{b_{n+1}}{2}$ for all $n$.

\item[(3)] $\dis k_j\in\left(-\frac{q_{j+1}}{2},\frac{q_{j+1}}{2} \right]$, for $0\leq j\leq N$.

\end{enumerate}

\end{proposition}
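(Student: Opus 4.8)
The plan is to build the digits by a greedy balanced-division algorithm and then verify the three properties, deferring uniqueness to a congruence argument. Put $L_0:=L$ and, once $L_j\in\Z$ is known, let $k_j$ be the unique integer in $(-q_{j+1}/2,q_{j+1}/2]$ with $k_j\equiv L_j\pmod{q_{j+1}}$; this is well defined because that interval contains exactly $q_{j+1}$ consecutive integers, hence a complete residue system modulo $q_{j+1}$. Set $L_{j+1}:=(L_j-k_j)/q_{j+1}\in\Z$. By construction each $k_j$ obeys (3), and since $b_{j+1}=q_{j+1}b_j$, unwinding the recursion yields $L=\sum_{j=0}^{n-1}k_jb_j+b_nL_n$ for every $n$. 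Thus, as soon as $L_N=0$ for some $N$, we obtain (1) with $k_j=0$ for $j\ge N$, and the expansion is finite.

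First I would settle termination, which is the crux and the only place the hypothesis on the ratios is used. From $|k_j|\le q_{j+1}/2$ one gets $|L_{j+1}|\le |L_j|/q_{j+1}+1/2\le |L_j|/2+1/2$, so $|L_j|$ strictly decreases while $|L_j|>1$; being non-negative integers, the $|L_j|$ must eventually land in $\{0,1\}$ and the sequence $L_j$ in $\{-1,0,1\}$, where it stays. A direct check gives $0\mapsto 0$ and $1\mapsto 0$, whereas $-1\mapsto -1$ exactly when $q_{j+1}=2$ and $-1\mapsto 0$ whenever $q_{j+1}\neq 2$ (then $-1$ itself is the chosen representative). Hence the process can fail to stop only by being trapped at $-1$ along an infinite tail of indices with $q_{j+1}=2$, and the assumption that $q_{j+1}\neq 2$ for infinitely many $j$ excludes exactly this. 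I expect this termination bookkeeping, together with the treatment of the endpoints of the digit interval, to be the main obstacle.

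For (2) I would prove by induction on $n$ that the partial sum $S_n:=\sum_{j=0}^n k_jb_j$ is precisely the representative of $L$ modulo $b_{n+1}$ lying in $(-b_{n+1}/2,b_{n+1}/2]$. The congruence $S_n\equiv L\pmod{b_{n+1}}$ is immediate from $L=S_n+b_{n+1}L_{n+1}$, so the real content is that $S_n$ lands in this interval; this should follow by combining the inductive control of $S_{n-1}\in(-b_n/2,b_n/2]$ with the range of $k_n$ and the identity $b_{n+1}=q_{n+1}b_n$. The delicate case is an even ratio $q_{n+1}$, where the right endpoint of the digit interval must be reconciled with the partial-sum interval; this is the second point I expect to require care.

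Finally, uniqueness should follow from (1) and (3) alone, by peeling digits through successive congruences. Given two families satisfying (1) and (3), pad the shorter with zeros (which still satisfy (3)) to a common length and reduce modulo $b_1=q_1$: since $b_j\equiv 0\pmod{b_1}$ for $j\ge 1$, both zeroth digits are $\equiv L\pmod{q_1}$ and lie in $(-q_1/2,q_1/2]$, so they coincide. Subtracting this digit and dividing by $b_1$ reduces the claim to the shifted $D$-sequence $(b_{j+1}/b_1)_{j}$, whose ratios are $(q_{j+1})_{j\ge1}$, and an induction then forces all digits to agree.
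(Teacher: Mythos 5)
The paper never proves Proposition \ref{bn} internally --- it defers to \cite[Proposition 2.2.1]{TesisDaniel} and \cite[Proposition 1.4]{Bar2014} --- so your attempt can only be judged against the statement itself. Most of your argument is sound: the greedy balanced-digit algorithm is well defined, the identity $L=\sum_{j=0}^{n-1}k_jb_j+b_nL_n$ is correct, the termination analysis is exactly right (the fixed point $L_j=-1$ occurs precisely when $q_{j+1}=2$, and the hypothesis that $q_{j+1}\neq 2$ for infinitely many $j$ is used exactly where it belongs), and the congruence-peeling argument does prove that conditions (1) and (3) determine the digits uniquely.

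The genuine gap is item (2), and it is not merely ``a point requiring care'': the inductive claim you propose --- that $S_n=\sum_{j=0}^{n}k_jb_j$ is the representative of $L$ modulo $b_{n+1}$ lying in $\left(-\frac{b_{n+1}}{2},\frac{b_{n+1}}{2}\right]$ --- is false for your algorithm, and in fact no proof of (2) can exist, because conditions (1), (2), (3) are jointly unsatisfiable for some admissible $\mathbf{b}$ and $L$. Concretely, take $\mathbf{b}=(1,2,4,12,36,108,\dots)$, i.e. $q_1=q_2=2$ and $q_j=3$ for $j\geq 3$ (the hypothesis holds, since all but two ratios differ from $2$), and $L=15$. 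Peeling congruences as in your own uniqueness argument, conditions (1) and (3) \emph{force} $k_0=1$, $k_1=1$, $k_2=0$, $k_3=1$, i.e. $15=1+2+12$; but then $\left|\sum_{j=0}^{1}k_jb_j\right|=3>2=\frac{b_2}{2}$, so (2) fails at $n=1$ (an index strictly below $N=3$, so no reading of ``for all $n$'' rescues it). Since the digits are forced, no alternative choice can repair this: the failure lies in the statement, not just in your induction. The same phenomenon occurs even when no ratio equals $2$: for $\mathbf{b}=(1,4,16,64,256,\dots)$ and $L=74$ the forced digits are $2,2,0,1$ and $S_1=10>8=\frac{b_2}{2}$. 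The obstruction is exactly the case you flagged: an even ratio $q_{n+1}$ can force the extreme digit $k_n=\frac{q_{n+1}}{2}$ while $S_{n-1}>0$, pushing $S_n$ past $\frac{b_{n+1}}{2}$.

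What survives, and what the present paper actually uses (in the claim inside Theorem \ref{proposicion20121128} only existence, uniqueness, and the bound $|k_j|\leq\frac{q_{j+1}}{2}$ from (3) are invoked), is precisely the part you proved: every $L$ has a unique expansion satisfying (1) and (3). To get a true statement including a bound on partial sums one must either weaken (2) (for instance to $|S_n|\leq\frac{1}{2}\sum_{j=1}^{n+1}b_j$, which follows trivially from (3)), or change the normalization and define $S_n$ as the centered representative of $L$ modulo $b_{n+1}$ in $\left(-\frac{b_{n+1}}{2},\frac{b_{n+1}}{2}\right]$ --- but then (3) can fail, since the digit $-\frac{q_{j+1}}{2}$ may occur. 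You cannot have both (2) and (3) together with uniqueness, so a correct write-up should say which of the two normalizations it adopts.
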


The proof of Proposition \ref{bn} can be found in \cite[Proposition 2.2.1]{TesisDaniel} and \cite[Proposition 1.4]{Bar2014}

\begin{definition}\label{DefinitionBadicas}
The family
$$\mathcal{B}_\b:=\{b_n\Z:n\in\N_0\}$$
is a neighborhood basis of $0$ for a linear (that is, it has a neighborhood basis consisting in open subgroups) group topology on $\Z$, which will be called $\b$-adic topology and is denoted by $\lambdab$.
\end{definition}

This topology is precompact. In \cite{AusBar2012}, it is proved that $(\Z,\lambdab)^\wedge=\Zbinfty$ and the following

\begin{lemma}\label{bConvergeEnLambdab}

Let $\b$ be a $D$-sequence. Then $b_n\rightarrow0$ in $\lambdab$.

\end{lemma}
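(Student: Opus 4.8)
The plan is to work directly from the definition of convergence in a linear topology. Since $\lambdab$ has the neighborhood basis $\mathcal{B}_\b=\{b_k\Z:k\in\N_0\}$ consisting of subgroups, proving $b_n\to 0$ amounts to showing that for each fixed $k\in\N_0$ the sequence $(b_n)$ eventually lies inside the subgroup $b_k\Z$.

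First I would fix an arbitrary basic neighborhood $b_k\Z$ of $0$. The key observation is that the membership $b_n\in b_k\Z$ is equivalent to the divisibility relation $b_k\mid b_n$. By condition (3) in the definition of a $D$-sequence, each term divides the next, and iterating this along the chain $b_k\mid b_{k+1}\mid\cdots\mid b_n$ yields $b_k\mid b_n$ for every $n\geq k$. Hence $b_n\in b_k\Z$ for all $n\geq k$.

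Since $k$ was arbitrary, every basic neighborhood of $0$ contains a tail of the sequence $(b_n)$, which is precisely the assertion that $b_n\to 0$ in $\lambdab$. There is essentially no obstacle here: the only ingredient is the transitivity of divisibility along the $D$-sequence, which is immediate from property (3). The remaining conditions $b_0=1$ and $b_n\neq b_{n+1}$ play no role in the argument, as it relies solely on the divisibility chain.
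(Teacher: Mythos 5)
Your proof is correct and is exactly the expected argument: since $\lambdab$ is linear with basic neighborhoods $b_k\Z$, convergence of $(b_n)$ to $0$ reduces to the divisibility chain $b_k\mid b_{k+1}\mid\cdots\mid b_n$, so every tail with $n\geq k$ lies in $b_k\Z$. The paper itself does not spell out a proof (it attributes the lemma to the reference \cite{AusBar2012}), but your direct verification is the standard one, and your observation that only property (3) of a $D$-sequence is needed is accurate.
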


\begin{definition}

A $D$-sequence $\b$ in $\Z$ induces in a natural way a sequence in $\T$. Namely, for a $D$-sequence, $\b=(b_n)_{n\in\N_0}$, define
$$\underline{\b}:=\left(\frac{1}{b_n}+\Z:n\in\N_0\right)\subset\T.$$
 Denote by $\taub$ the topology of uniform convergence on $\underline{\b}$.

\end{definition}

\section{Killing $\lambdab$ null sequences.}\label{SeccionKilling}

In this section we prove that for a $D$ sequence, $\b$ with bounded ratios and any $\lambdab$ convergent sequence, $(a_n)$, there exists a strictly finer compatible topology (constructed as a topology of uniform convergence on a suitable sequence in $\Dinfty(\b)$) such that the fixed sequence $(a_n)$ is no longer convergent. This will allow us to find a locally quasi-convex topology which has no convergent sequences, but it is still compatible.

\begin{theorem}\label{proposicion20121128}\label{proposicion20130614}

Let $\b$ be a basic $D$-sequence with bounded ratios and let $(x_n)\subset\Z$ be a non-quasiconstant sequence such that $x_n\stackrel{\lambdab}{\rightarrow}0$. Then there exists a metrizable locally quasi-convex compatible group topology $\tau(=\tauc$ for some subsequence $\mathbf{c}\mbox{ of }\b)$ on $\Z$ satisfying:
\par
$(a)$ $\tau$ is compatible with $\lambdab$.
\par
$(b)$  $x_n\stackrel{\tau}{\nrightarrow}0$.
\par
$(c)$ $\lambdab<\tau$.

\end{theorem}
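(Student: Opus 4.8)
The plan is to realise $\tau$ as $\tauc$ for a subsequence $\mathbf{c}$ of $\b$ chosen so that the characters $\tfrac{1}{c_k}+\Z$ ``see'' the tail of $(x_n)$. Since a sequence converges iff every subsequence does, it suffices to keep \emph{some} subsequence of $(x_n)$ bounded away from $0$, so I am free to thin $(x_n)$ whenever convenient. For each $n$ with $x_n\neq0$ let $m_n$ be the largest index with $b_{m_n}\mid x_n$ (finite because $x_n\neq 0$ and $b_k\to\infty$). As $(x_n)$ is non-quasiconstant there are infinitely many such $n$, and $x_n\stackrel{\lambdab}{\rightarrow}0$ forces $b_k\mid x_n$ eventually for every $k$, hence $m_n\to\infty$. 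Writing $x_n=b_{m_n}y_n$, maximality gives $b_{m_n+1}=b_{m_n}q_{m_n+1}\nmid x_n$, i.e. $q_{m_n+1}\nmid y_n$.

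First I would build $\mathbf{c}$. Passing to a subsequence of $(x_n)$, I may assume $m_n$ is strictly increasing with $m_{n+1}-m_n\to\infty$ (possible since $m_n\to\infty$). Put $\mathbf{c}:=(1,b_{m_1+1},b_{m_2+1},\dots)$. This is a subsequence of $\b$, hence a $D$-sequence, and its consecutive ratios are products $\prod_j q_j\ge 2^{\,m_{i+1}-m_i}\to\infty$, so $\mathbf{c}\in\Dinfty(\b)$. By the results on uniform-convergence topologies of \cite{AusBar2012}, $\tauc$ is metrizable and locally quasi-convex with $(\Z,\tauc)^\wedge=\Zcinfty$. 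Because $\mathbf{c}$ is a cofinal subsequence of $\b$, each $\Z(b_n)\subseteq\Z(c_k)$ whenever $c_k$ is a multiple of $b_n$, while conversely $\Z(c_k)=\Z(b_{n_k})\subseteq\Zbinfty$; hence $\Zcinfty=\Zbinfty=(\Z,\lambdab)^\wedge$, which is precisely compatibility, giving $(a)$.

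For $(c)$ note that $\lambdab$ is precompact, so it coincides with the weak topology $\sigma(\Z,\Zbinfty)$ induced by its own characters; this is the coarsest topology compatible with $\lambdab$, whence $\lambdab\le\tauc$. Strictness will follow from $(b)$. For $(b)$ I evaluate $x_n$ at the character $\tfrac{1}{b_{m_n+1}}+\Z\in\underline{\mathbf{c}}$: since $b_{m_n+1}\mid b_j$ for $j\ge m_n+1$ and $y_n$ carries the lower part,
$$\frac{x_n}{b_{m_n+1}}+\Z=\frac{y_n}{q_{m_n+1}}+\Z,\qquad q_{m_n+1}\nmid y_n.$$
As $q_{m_n+1}\le N$ by bounded ratios, the representative of $\tfrac{y_n}{q_{m_n+1}}$ in $(-\tfrac12,\tfrac12]$ has absolute value at least $\tfrac{1}{q_{m_n+1}}\ge\tfrac1N>\tfrac{1}{4N}$, so this point lies outside $\T_N$. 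Since $\tfrac{1}{b_{m_n+1}}+\Z\in\underline{\mathbf{c}}$, the integer $x_n$ lies outside the basic $\tauc$-neighborhood $\{x:x\cdot t\in\T_N\ \forall t\in\underline{\mathbf{c}}\}$ for every $n$ in the thinned sequence, so $x_n\stackrel{\tauc}{\nrightarrow}0$. This proves $(b)$ and, together with $\lambdab\le\tauc$, the strict inequality in $(c)$.

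The main obstacle is the tension inside the single choice of $\mathbf{c}$: to secure compatibility, metrizability and local quasi-convexity I need $\mathbf{c}\in\Dinfty(\b)$ (so that the characters $\tfrac1{c_k}$ tend to $0$ and the dual collapses to $\Zbinfty$), whereas to destroy the convergence of $(x_n)$ I must retain exactly the detecting moduli $b_{m_n+1}$. These are reconciled by thinning $(x_n)$ so that the gaps $m_{i+1}-m_i$ diverge, which is precisely what forces the ratios of $\mathbf{c}$ to infinity. The only genuinely numerical input is the uniform lower bound $\tfrac1{q_{m_n+1}}\ge\tfrac1N$, where bounded ratios are essential; without them the distances could tend to $0$ and the argument would collapse. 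One may equally obtain this bound from the canonical expansion of Proposition \ref{bn}, reading off the lowest nonzero digit.
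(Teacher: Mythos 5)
Your proposal is correct and takes essentially the same route as the paper: you extract the maximal divisibility indices $m_n$ (the paper does the same thing via an inductive claim, using the canonical expansion of Proposition \ref{bn} to isolate the lowest nonzero digit), thin so that the gaps diverge to put $\mathbf{c}=(b_{m_n+1})$ in $\Dinfty(\b)$, use bounded ratios to keep the evaluations $\frac{x_n}{b_{m_n+1}}+\Z$ outside a fixed $\T_N$, and import metrizability, local quasi-convexity and $(\Z,\tauc)^\wedge=\Zbinfty$ from \cite{AusBar2012}. The only cosmetic deviation is that you obtain $\lambdab\le\tauc$ from precompactness of $\lambdab$ (the weak-topology argument) where the paper simply cites \cite[Proposition 3.7]{AusBar2012}.
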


\begin{proof}

We define $\tau$ as the \tcu on a subsequence $\mathbf{c}$ of $\b$, which we construct by means of the claim:

\begin{claim}

Since $\b$ is a $D$-sequence with bounded ratios, there exists $L\in\N$ such that $q_{n+1}\leq L$ for all $n\in\N$.

We can construct inductively two sequences $(n_j),(m_j)\subset \N$ such that $$n_{j+1}-n_j>j\mbox{ and }\dis \frac{x_{m_j}}{b_{n_j+1}}+\Z\notin\T_L.$$

\end{claim}

\underline{Proof of the claim:}

\underline{$j=1.$}

Choose $n_1\in\N$ such that there exists $\dis x_{m_1}$, satisfying $\dis b_{n_1}\mid x_{m_1}$ but $\dis b_{n_1+1}\nmid x_{m_1}$. By Proposition \ref{bn}, we write $$\dis x_{m_1}=\sum_{i=0}^{N(x_{m_1})}k_ib_{i}.$$
The condition $\dis b_{n_1}\mid x_{m_1}$ implies $k_i=0$ if $i<n_1$ and $\dis b_{n_1+1}\nmid x_{m_1}$ implies $k_{n_1}\neq 0$.

Hence, $$\dis \frac{x_{m_1}}{b_{n_1+1}}+\Z= \frac{\dis \sum_{i=0}^{N(x_{m_1})}k_ib_{i}}{b_{n_1+1}}+\Z = \frac{\dis \sum_{i=0}^{n_1}k_ib_{i}}{b_{n_1+1}}+\Z = \frac{k_{n_1}}{q_{n_1+1}}+\Z.$$
 We have that $$\frac{1}{2}\ge\frac{\abs{ k_{n_1}} }{q_{n_1+1}}\geq \frac{1}{L}.$$
 Therefore, $$\dis\frac{x_{m_1}}{b_{n_1+1}}+\Z\notin\T_L=\left[-\frac{1}{4L},\frac{1}{4L}\right]+\Z.$$

\underline{$j\Rightarrow j + 1.$}

Suppose we have $n_j, m_j$ satisfying the desired conditions. Let $n_{j+1}\geq n_j+j$ be a natural number such that there exists $\dis x_{m_{j+1}}$ satisfying $\dis b_{n_{j+1}}\mid x_{m_{j+1}}$ and $b_{n_{j+1}+1}\nmid x_{m_{j+1}}$. By Proposition \ref{bn}, we write $$\dis x_{m_{j+1}}=\sum_{i=0}^{N(x_{m_{j+1}})}k_ib_{i}.$$
 The condition $\dis b_{n_{j+1}}\mid x_{m_{j+1}}$ implies that $k_i=0$ if $i<n_{j+1}$ and $b_{n_{j+1}+1}\nmid x_{m_{j+1}}$ implies $k_{n_{j+1}}\neq 0$.

Then, $$\dis\frac{x_{m_{j+1}}}{\dis b_{n_{j+1}+1}}+\Z= \frac{\dis \sum_{i=0}^{N(x_{m_{j+1}})}k_ib_{i}}{\dis b_{n_{j+1}+1}}+\Z=\frac{\dis \sum_{i=0}^{n_{j+1}}k_ib_{i}}{\dis b_{n_{j+1}+1}}+\Z = \frac{\dis k_{n_{j+1}}}{\dis q_{n_{j+1}+1}}+\Z.$$
From $$\dis \frac{1}{2}\ge \frac{\abs{k_{n_{j+1}}} }{q_{n_{j+1}+1}}\geq\frac{1}{q_{n_{j+1}+1}}\geq\frac{1}{L},$$
 we get that $$\dis \frac{x_{m_{j+1}}}{b_{n_{j+1}+1}}+\Z\notin\T_L.$$

This ends the proof of the claim.

\vspace{0.3cm}

We continue the proof of Theorem \ref{proposicion20130614}. Consider now $\mathbf{c}=\left(b_{n_j+1}\right)_{j\in\N}$. Let $$\underline{\mathbf{c}}=\left\{\frac{1}{b_{n_j+1}}+\Z : j\in\N\right\}$$
 and let $\tau=\tauc$ be the \tcu on $\underline{\mathbf{c}}$. Then:

\begin{enumerate}

\item[(1)] By Proposition \cite[Remark 3.3]{AusBar2012}, $\tau$ is metrizable and locally quasi-convex. By \cite[Proposition 3.7]{AusBar2012}, we have $\lambdab<\tau$.

\item[(2)] By the claim, we have proved that $x_{m_j}\stackrel{j\rightarrow\infty}{\nrightarrow}0$ in $\tau$. This implies that $x_n\nrightarrow 0$ in $\tau$.

\item[(3)] Since $n_{j+1}\geq n_j+j$, we get that $n_{j+1}-n_j\geq j$. Thus, $\dis \frac{b_{n_{j}+1}}{b_{n_{j+1}+1}}\leq\frac{1}{2^j}\rightarrow 0$. By \cite[Theorem 4.4]{AusBar2012}, we have that $(\Z,\tau)^\wedge=\Zbinfty$.

\end{enumerate}

\end{proof}

\section{The topology $\Gammab$.}\label{SeccionGammaB}

In Section \ref{SeccionKilling}, we use a subfamily of $\Dinfty(\b)$ to eliminate all $\lambdab$ convergent sequences. Unfortunately, this subfamily depends on the choices made in Theorem \ref{proposicion20121128}. This is why we introduce a new topology $\Gammab$, which is the supremmum of the topologies of uniform convergence on sequences in $\Dinfty(\b)$. The topology $\Gammab$ is well-defined (in the sense that it does not depend on the choices made in Theorem \ref{proposicion20121128}).

\begin{definition}

Let $\b$ be a basic $D$-sequence with bounded ratios. We define on $\Z$ the topology $$\Gammab:=\sup\{\tauc:\mathbf{c}\in\Dinfty(\b)\}.$$

\end{definition}

Now we set some results on $\Gammab$:

\begin{remark}

Since $\Dinfty(\b)$ is no-empty, we have that $\lambdab<\Gammab$.

\end{remark}

\begin{theorem}\label{mupNoTieneSucesionesConvergentes}

Let $\b$ a basic $D$-sequence with bounded ratios. Then the topology $\Gammab$ has \textbf{no} nontrivial convergent sequences. Hence, it is not metrizable.

\end{theorem}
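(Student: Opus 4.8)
The plan is to show that any sequence $(x_n)$ converging to $0$ in $\Gammab$ must be quasi-constant (eventually equal to $0$), which immediately rules out nontrivial convergent sequences and hence metrizability. The key leverage is that $\Gammab$ is the supremum of all the topologies $\tauc$ for $\mathbf{c}\in\Dinfty(\b)$, so convergence in $\Gammab$ forces convergence in \emph{every} such $\tauc$ simultaneously. The strategy is to argue by contraposition: assuming $(x_n)$ is not quasi-constant, I would build a single subsequence $\mathbf{c}\in\Dinfty(\b)$ that witnesses failure of convergence, exactly as was done in Section \ref{SeccionKilling}.

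Concretely, first I would reduce to a non-quasiconstant sequence $(x_n)$ with $x_n\stackrel{\lambdab}{\to}0$ (since $\lambdab<\Gammab$, any $\Gammab$-null sequence is $\lambdab$-null). Then I would invoke the machinery of Theorem \ref{proposicion20121128}: for a non-quasiconstant $\lambdab$-null sequence one can extract indices $(n_j),(m_j)$ and form $\mathbf{c}=(b_{n_j+1})_j$, with the gap condition $n_{j+1}-n_j\geq j$ guaranteeing $\mathbf{c}\in\Dinfty(\b)$, such that $\frac{x_{m_j}}{b_{n_j+1}}+\Z\notin\T_L$. This shows $x_{m_j}\stackrel{\tauc}{\nrightarrow}0$, so $(x_n)$ fails to converge to $0$ in this particular $\tauc$, and therefore fails to converge in the finer topology $\Gammab$. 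Taking the contrapositive yields that every $\Gammab$-convergent sequence is quasi-constant.

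The main obstacle I anticipate is making sure the extraction in the claim of Theorem \ref{proposicion20121128} actually applies to an \emph{arbitrary} non-quasiconstant $\lambdab$-null sequence, rather than only to a conveniently chosen one. The crucial point to verify is that, for infinitely many $j$, there genuinely exist indices $m$ with $b_{n_j}\mid x_m$ but $b_{n_j+1}\nmid x_m$; this uses that $(x_n)$ is $\lambdab$-null (so the $b_{n_j}$-divisibility is available for large indices) combined with non-quasiconstancy (so one cannot have $b_{n+1}\mid x_m$ for all large $m$ at every level $n$, which would force quasi-constancy). I would spell out this dichotomy carefully, as it is the hinge that links non-quasiconstancy to the existence of a separating subsequence.

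Finally, once no nontrivial sequence converges, non-metrizability is immediate: a metrizable group topology is first countable, and in a first-countable (indeed any metrizable) topological group that is not discrete there must exist a nontrivial convergent sequence converging to $0$; since $\Gammab$ is strictly finer than the precompact non-discrete $\lambdab$ it is non-discrete, so the absence of nontrivial convergent sequences is incompatible with metrizability. I would state this last implication in one line, citing that metrizable implies first countable implies the existence of nontrivial null sequences whenever the topology is not discrete.
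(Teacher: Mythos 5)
Your core argument --- that $\Gammab$ has no nontrivial convergent sequences --- is correct and is essentially the paper's own proof: reduce to $\lambdab$-null sequences (since $\lambdab<\Gammab$), then use the extraction of Theorem \ref{proposicion20121128} to produce $\mathbf{c}=(b_{n_j+1})_j\in\Dinfty(\b)$ with $x_{m_j}\stackrel{\tauc}{\nrightarrow}0$, and conclude from $\tauc\leq\Gammab$. The ``hinge'' you single out is indeed where the non-quasiconstancy hypothesis enters, and your dichotomy for it is sound: if beyond some level $n_0$ there were no index $m$ with $b_n\mid x_m$ but $b_{n+1}\nmid x_m$, then every $x_m$ divisible by $b_{n_0}$ would be divisible by all $b_n$, hence equal to $0$, and $\lambdab$-nullity would then make $(x_n)$ quasi-constant.

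The genuine flaw is in your last paragraph. The claim that $\Gammab$ is non-discrete ``since it is strictly finer than the precompact non-discrete $\lambdab$'' is a non sequitur: being strictly finer than a non-discrete topology implies nothing about non-discreteness (the discrete topology on $\Z$ is also strictly finer than $\lambdab$). And non-discreteness cannot be waved away here, because the implication ``no nontrivial convergent sequences $\Rightarrow$ not metrizable'' is false for discrete groups; given your (correct) observation about first-countable non-discrete groups, the whole content of the last step \emph{is} the non-discreteness of $\Gammab$. The clean way to get it is via the dual group: $(\Z,\Gammab)^\wedge=\Zbinfty$ is countable (Lemma \ref{LemaDualContable}, whose proof goes through $\deltab$ and does not depend on the present theorem), whereas $\Z$ with the discrete topology has dual group $\T$, which is uncountable; hence $\Gammab$ is not discrete. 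To be fair, the paper's own proof is silent on this point and needs the same observation, but since you chose to spell the step out, the justification you supply has to be valid, and the one you give is not.
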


\begin{proof}

 Since every $\Gammab$-convergent sequence is $\lambdab$-convergent, we consider only $\lambdab$-convergent sequences. Let $(x_n)\subseteq\Z$ be a sequence such that $x_n\stackrel{\lambdab}{\rightarrow}0$. By Proposition \ref{proposicion20121128}, there exists a subsequence $(b_{n_k})_k$ satisfying that $\tau_{(b_{n_k})}$ is a locally quasi-convex topology, $(b_{n_k})\in\Dinfty(\b)$ and $\dis x_n\stackrel{\tau_{b_{n_k}}}{\nrightarrow}0$. Since $\tau_{b_{n_k}}\leq\Gammab$, we know that $x_n\stackrel{\Gammab}{\nrightarrow}0$. Hence, the only convergent sequences in $\Gammab$ are the trivial ones. Since $\Gammab$ has no nontrivial convergent sequences, it is {\bf not metrizable.}

\end{proof}

\begin{corollary}

Let $\mathbf{p}=(p^n)$. Then $\Gammap$ has no nontrivial convergent sequences.

\end{corollary}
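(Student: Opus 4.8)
The plan is to derive the corollary directly as a special case of Theorem \ref{mupNoTieneSucesionesConvergentes}, so the entire task reduces to verifying that the sequence $\mathbf{p}=(p^n)_{n\in\N_0}$ meets the two hypotheses of that theorem, namely that it is a basic $D$-sequence and that it has bounded ratios. First I would check the $D$-sequence axioms from the defining list: $p^0=1$ gives condition (1); since $p\geq 2$ we have $p^n\neq p^{n+1}$, giving condition (2); and $p^n$ clearly divides $p^{n+1}$, giving condition (3). Thus $\mathbf{p}\in\mathcal{D}$.

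Next I would compute the ratios $q_n=\frac{b_n}{b_{n-1}}=\frac{p^n}{p^{n-1}}=p$ for every $n\in\N$. Since $p$ is prime, every $q_n$ is a prime number, so $\mathbf{p}$ is \textbf{basic} in the sense of the definition. Moreover, taking $N=p$ in the bounded-ratios definition, we have $q_n=p\leq N$ for all $n$, so $\mathbf{p}$ has \textbf{bounded ratios} as well. Both hypotheses of Theorem \ref{mupNoTieneSucesionesConvergentes} are therefore satisfied.

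Finally I would invoke Theorem \ref{mupNoTieneSucesionesConvergentes} applied to $\mathbf{b}=\mathbf{p}$ to conclude immediately that $\Gammap$ has no nontrivial convergent sequences. Since the corollary is merely an instantiation, there is no genuine obstacle here; the only point requiring any care is the implicit assumption that $p$ is a prime number (which is what makes $\mathbf{p}$ basic), so I would make sure this hypothesis on $p$ is stated or understood. Everything else is the routine verification of the definitions.
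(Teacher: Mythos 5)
Your proposal is correct and is precisely the argument the paper intends: the corollary is stated without proof as an immediate instance of Theorem \ref{mupNoTieneSucesionesConvergentes}, and your verification that $(p^n)$ is a basic $D$-sequence with bounded ratios (all ratios $q_n=p$, prime and bounded) is exactly the routine check required, including the correct observation that $p$ being prime is what makes the sequence basic.
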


\begin{lemma}\label{LemaDualContable}

Let $\b$ be a $D$-sequence. Then $(\Z,\Gammab)^\wedge=\Zbinfty$.

\end{lemma}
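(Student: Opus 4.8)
The plan is to prove the two inclusions separately. The inclusion $\Zbinfty\sq(\Z,\Gammab)^\wedge$ is immediate from $\lambdab\le\Gammab$: a finer topology has at least as many continuous characters, so $\Zbinfty=(\Z,\lambdab)^\wedge\sq(\Z,\Gammab)^\wedge$. (Equivalently, one may fix any $\mathbf{c}\in\Dinfty(\b)$, note $\tauc\le\Gammab$, and use $(\Z,\tauc)^\wedge=\Zbinfty$ from \cite[Theorem 4.4]{AusBar2012}.) The whole difficulty lies in the reverse inclusion $(\Z,\Gammab)^\wedge\sq\Zbinfty$, i.e.\ in showing that passing to the supremum creates no new continuous characters.

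For this, I would first reduce the supremum of the infinite family $\{\tauc:\mathbf{c}\in\Dinfty(\b)\}$ to finite subfamilies. Recall that a character is continuous if and only if it is bounded (maps into $\T_+$) on some neighborhood of $0$, and that a neighborhood basis at $0$ for a supremum of group topologies is given by the finite intersections of neighborhoods coming from the individual topologies. Hence, if $\chi\in(\Z,\Gammab)^\wedge$, there are $\mathbf{c}_1,\dots,\mathbf{c}_n\in\Dinfty(\b)$ and a neighborhood $V=V_1\cap\dots\cap V_n$ (with $V_i$ a $\tau_{\mathbf{c}_i}$-neighborhood of $0$) such that $\chi(V)\sq\T_+$. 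Thus $\chi$ is already continuous for the finite supremum $\sup_{i\le n}\tau_{\mathbf{c}_i}$, and it suffices to prove that $(\Z,\sup_{i\le n}\tau_{\mathbf{c}_i})^\wedge=\Zbinfty$ for every finite choice.

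The second step identifies this finite supremum with a single topology of uniform convergence. Since uniform convergence on a union of sets is the supremum of the uniform convergence topologies, $\sup_{i\le n}\tau_{\mathbf{c}_i}=\tau_{\mathbf{c}^*}$, where $\mathbf{c}^*$ is the subsequence of $\b$ whose underlying set is $\underline{\mathbf{c}_1}\cup\dots\cup\underline{\mathbf{c}_n}$. I claim that $\mathbf{c}^*\in\Dlinfty(\b)$ with parameter $\ell=n$: given $M$, for all large $m$ every consecutive ratio of each $\mathbf{c}_i$ involving terms $\ge c_m^*$ exceeds $M$, and a pigeonhole argument on the $n+1$ terms $c_m^*,\dots,c_{m+n}^*$ produces two of them, $c_p^*<c_{p'}^*$, lying in a common $\mathbf{c}_i$; then $c^*_{m+n}/c^*_m\ge c^*_{p'}/c^*_p\ge M$, so $c^*_{m+n}/c^*_m\to\infty$. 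Finally, because $\b$ is a $D$-sequence, any subsequence $\mathbf{c}^*$ is cofinal for divisibility, so $\mathbf{c}^*$ generates the same subgroup $\Zbinfty$ of $\T$; the $\Dlinfty$-version of \cite[Theorem 4.4]{AusBar2012} then yields $(\Z,\tau_{\mathbf{c}^*})^\wedge=\Zbinfty$, whence $\chi\in\Zbinfty$ and the reverse inclusion follows.

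The main obstacle I anticipate is the last ingredient: once everything is reduced to a single $\tau_{\mathbf{c}^*}$, one genuinely needs the computation $(\Z,\tau_{\mathbf{c}^*})^\wedge=\Zbinfty$ for $\mathbf{c}^*\in\Dlinfty(\b)$, i.e.\ the $\ell$-version of the dual computation rather than only the $\Dinfty$ case used in Theorem \ref{proposicion20130614}. The combinatorial merging step is elementary and the finite-reduction step is formal; the weight of the argument rests on controlling which characters remain bounded on $\tau_{\mathbf{c}^*}$-neighborhoods when the consecutive ratios of $\mathbf{c}^*$ need not tend to infinity but only do so in blocks of bounded length.
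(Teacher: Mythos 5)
Your proposal is correct, but it follows a genuinely different route from the paper's proof. The paper argues by a two-line sandwich: it introduces the auxiliary topology $\deltab=\sup\{\tauc:\mathbf{c}\in\Dlinfty(\b)\}$ of \cite[Definition 4.4.4]{TesisDaniel}, notes that $\lambdab\le\Gammab\le\deltab$ (the latter inclusion is immediate from $\Dinfty(\b)\sq\Dlinfty(\b)$, no merging of sequences needed), and then quotes the computation $(\Z,\deltab)^\wedge=\Zbinfty$; all difficulty concerning the supremum is delegated to that cited result. You instead dismantle the supremum by hand: the polar characterization of continuity (the paper's remark that $(G,\tau)^\wedge=\bigcup_{V\in\mathcal{B}}V\tr$) reduces $\Gammab$-continuity of a character to continuity for a finite supremum $\sup_{i\le n}\tau_{\mathbf{c}_i}$; this finite supremum is the topology of uniform convergence on $\underline{\mathbf{c}_1}\cup\dots\cup\underline{\mathbf{c}_n}$, hence equals $\tau_{\mathbf{c}^*}$ for the merged subsequence $\mathbf{c}^*$; and your pigeonhole argument correctly places $\mathbf{c}^*$ in $\Dlinfty(\b)$ with $\ell=n$ (among the $n+1$ terms $c^*_m,\dots,c^*_{m+n}$ two must lie in a common $\mathbf{c}_i$, and monotonicity of $\b$ converts the consecutive-ratio bound for that $\mathbf{c}_i$ into $c^*_{m+n}/c^*_m\ge M$). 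After this, each proof rests on one unproved external ingredient, and the two routes differ in what they must import: you need only the single-sequence statement $(\Z,\tauc)^\wedge=\Zbinfty$ for $\mathbf{c}\in\Dlinfty(\b)$ (the $\Dlinfty$-analogue of \cite[Theorem 4.4]{AusBar2012}, available in \cite{TesisDaniel}), whereas the paper imports the formally stronger statement that the dual of the full supremum $\deltab$ is $\Zbinfty$. Your reduction in effect shows the two imports are equivalent, which is a real gain in transparency: the merging step makes explicit why passing to a supremum over the whole family creates no new characters, which is precisely the point the paper's citation hides. The price is that your argument is only as complete as that single-sequence $\Dlinfty$ dual computation, which you rightly identify as the load-bearing ingredient; since the paper's own proof leans on an ingredient of exactly the same nature (indeed a stronger one), this is a citation-level dependence rather than a gap.
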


For the proof of Lemma \ref{LemaDualContable} we need the following definition of \cite{TesisDaniel}:

\begin{definition}\cite[Definition 4.4.4]{TesisDaniel}

Let $\b$ be a basic $D$-sequence with bounded ratios. We define on $\Z$ the topology $$\deltab:=\sup\{\tauc:\mathbf{c}\in\Dlinfty(\b)\}.$$

\end{definition}

{\noindent\it Proof of Lemma \ref{LemaDualContable}:}

Since $\lambdab<\Gammab$, it is clear that $\Zbinfty\leq(\Z,\Gammab)^\wedge$.

By definition, it is clear that $\Gammab\leq\deltab$. Hence $(\Z,\Gammab)^\wedge\leq(\Z,\deltab)^\wedge=\Zbinfty$.

\QED \smallskip

Now we set some interesting questions related to the Mackey Problem and $\Gammab$:

\begin{openquestion}

Is $\Gammab=\deltab$?

\end{openquestion}

\begin{openquestion}

Is $\Gammab$ (or $\deltab$, if different) the Mackey topology for $(\Z,\lambdab)$?

\end{openquestion}



The following lemmas are easy to prove, even for a topological space instead of a topological group.

\begin{lemma}

Let $K$ be a countably infinite compact subset of a Hausdorff space. Then $K$ is first countable, and, hence, every accumulation point in $K$ is the limit point of some sequence.

\end{lemma}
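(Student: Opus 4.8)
The plan is to establish first countability at an arbitrary point and then deduce the statement about accumulation points from the standard sequential characterization available in first-countable spaces. Throughout I regard $K$ with its subspace topology, so that $K$ is itself a compact Hausdorff space, and ``open'' means relatively open in $K$.

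Fix $x\in K$. Since $K$ is countable, I can enumerate the remaining points as $K\setminus\{x\}=\{y_n:n\in\N\}$. For each $n$, the Hausdorff property yields disjoint open sets $U_n\ni x$ and $V_n\ni y_n$. My candidate neighborhood basis at $x$ is the countable family $\mathcal{B}_x$ of all finite intersections $U_{n_1}\cap\cdots\cap U_{n_k}$; each member is open and contains $x$.

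The key step is to show that $\mathcal{B}_x$ is a basis, and this is where compactness enters. Let $W$ be any open neighborhood of $x$. Then $K\setminus W$ is a closed subset of the compact space $K$, hence compact, and it is covered by $\{V_n:n\in\N\}$, because every point of $K\setminus W$ is some $y_m$ (as $x\in W$) and $y_m\in V_m$. Extracting a finite subcover $V_{n_1},\dots,V_{n_k}$ and setting $V=U_{n_1}\cap\cdots\cap U_{n_k}\in\mathcal{B}_x$, the disjointness $U_{n_i}\cap V_{n_i}=\emptyset$ forces $V$ to miss $\bigcup_i V_{n_i}\supseteq K\setminus W$, so $V\subseteq W$. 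Thus every neighborhood of $x$ contains a member of $\mathcal{B}_x$, proving first countability at $x$; as $x$ was arbitrary, $K$ is first countable.

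Finally, if $x$ is an accumulation point of $K$, I choose a decreasing countable basis $V_1\supseteq V_2\supseteq\cdots$ at $x$ (by intersecting initial segments drawn from $\mathcal{B}_x$), pick $x_n\in(V_n\cap K)\setminus\{x\}$, which is nonempty precisely because $x$ is an accumulation point, and observe that $x_n\to x$. I do not expect a genuine obstacle here: the only point requiring care is the realization that a single covering of $K\setminus W$ by the $V_n$, together with compactness, already produces the desired finite intersection of the $U_n$ without any appeal to regularity or normality — the disjointness of each pair $(U_n,V_n)$ does all the work.
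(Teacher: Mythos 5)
Your proof is correct, and it takes a genuinely (if modestly) different route from the paper's. Both arguments share the same skeleton: enumerate the countable set, separate the chosen point from every other point, and use compactness to show that finite intersections of the separating neighborhoods form a basis at that point. But the paper separates via \emph{regularity}: for each point $x_n\neq a$ it picks a \emph{closed} neighborhood $V_n$ of $a$ with $x_n\notin V_n$ (citing that the compact Hausdorff space $K$ is regular), and then applies compactness in its finite-intersection-property form to the closed family $\{K\setminus W\}\cup\{V_n:n\in\N\}$ --- the closedness of the $V_n$ is exactly what makes that argument legitimate. You instead use only the Hausdorff axiom, keeping the disjoint open pairs $(U_n,V_n)$, and apply compactness in its open-cover form to the closed, hence compact, set $K\setminus W$. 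The two uses of compactness are dual to one another, but your route is more elementary: it does not need the (standard, but not free) fact that compact Hausdorff spaces are regular, exactly as you observe at the end. Two further small points in your favor: you explicitly carry out the final claim of the statement (extracting a decreasing countable basis and a sequence converging to a given accumulation point), which the paper compresses into the word ``hence''; and by enumerating $K\setminus\{x\}$ rather than all of $K$ you quietly avoid the paper's harmless slip of requesting a neighborhood of $a$ that misses $x_n$ even in the case $x_n=a$.
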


\begin{proof}

Let $K = \{x_n, n \in \N\}$.  Fix $a \in K$.  Consider for each $ x_n \in K$  a closed neighborhood of $a$ (this is possible because $K$ is regular) say $V_n \subset X \setminus {x_n}$.  The set $\{V_n, n \in \N\}$ and its finite intersections constitute  a neighborhood basis for $a\in K$.

In fact, if $W$ is any open neighborhood of $a$, we have a family of closed sets  given by $\{X \setminus W, V_n, n \in \N\}$ with empty intersection. Since $K$ is compact, the mentioned family cannot have  the finite intersection property. Thus, there is a finite subfamily $\{ V_j, j \in F\}$ such that $(X\setminus W) \cap_{i \in F} V_i = \emptyset$.

\end{proof}

\begin{lemma}\label{lemma4.7}

If a countable topological  group $G$ has no nontrivial convergent sequences, then it does not have infinite compact subsets either.

\end{lemma}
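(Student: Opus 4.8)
```latex
The plan is to prove the contrapositive: assume $G$ is a countable topological group possessing an infinite compact subset $K$, and produce a nontrivial convergent sequence in $G$. Since $K\subseteq G$ and $G$ is countable, $K$ is a countably infinite compact subset of the Hausdorff group $G$. I would first invoke the previous lemma, which guarantees that such a $K$ is first countable and, crucially, that every accumulation point of $K$ is the limit of some sequence lying in $K$.

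The key step is then to locate an accumulation point inside $K$. An infinite subset of a compact Hausdorff space must have an accumulation point (by compactness, the infinite set cannot be closed and discrete), and since $K$ is compact, hence closed, that accumulation point lies in $K$ itself. Thus $K$ has an accumulation point $a\in K$. Applying the first-countability conclusion of the preceding lemma, there is a sequence $(y_k)$ of points of $K$ converging to $a$, and by discarding repetitions we may assume the $y_k$ are distinct, so the sequence is nontrivial (not eventually constant).

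Finally I would translate this into a convergent sequence in the sense excluded by the hypothesis. The sequence $(y_k)$ converges to $a$ in $K$, hence in $G$ by the subspace topology. Using the group structure, the translated sequence $(y_k - a)$ converges to $0$ and consists of infinitely many distinct terms, so it is a nontrivial convergent sequence in $G$. This contradicts the assumption that $G$ has no nontrivial convergent sequences, completing the contrapositive and hence the proof.

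I expect the only genuinely delicate point to be the justification that $K$ actually \emph{has} an accumulation point belonging to $K$: one must argue that an infinite subset of a compact space accumulates somewhere, and that compactness of $K$ (so that $K$ is closed in the Hausdorff space) forces this accumulation point into $K$, so that the previous lemma applies. Everything after that — extracting a convergent sequence and translating it by $a$ to obtain a sequence converging to $0$ — is routine and uses only that $G$ is a topological group.
```
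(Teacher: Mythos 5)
Your proof is correct and follows essentially the same route as the paper's: invoke the preceding lemma to get first countability of the countably infinite compact set $K$, observe that an infinite compact set has an accumulation point lying in $K$, and extract a nontrivial convergent sequence, contradicting the hypothesis. The only differences are cosmetic: you phrase the argument as a contrapositive rather than a contradiction, and the final translation step producing $(y_k - a)\rightarrow 0$ is superfluous, since a nontrivial convergent sequence with any limit already contradicts the hypothesis.
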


\begin{proof}

 Suppose, by contradiction, that $K\subset G$ is an infinite compact subset. Since $K$ is first countable and Hausdorff, any accumulation point is the limit of a sequence in $K$. Since$K$ is infinite and compact, it has an accumulation point, contradicting the fact that there does not exist convergent sequences in $G$ (nor in $K$). Hence any compact subset must be finite.

\end{proof}

\begin{proposition}\label{noreflex}

 Let $\b$ be a basic $D$-sequence with bounded ratios, then the group $G=(\Z,\Gammab)$ is not reflexive. In fact, the bidual $G^{\wedge\wedge}$ can be identified with $\Z$ endowed with the discrete topology.

\end{proposition}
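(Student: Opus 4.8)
The strategy is to compute the bidual explicitly and then compare it with $G$. First I would pin down $G^\wedge$ as a topological group. By Lemma \ref{LemaDualContable} we have $(\Z,\Gammab)^\wedge=\Zbinfty$ algebraically. Since $\Gammab$ has no nontrivial convergent sequences (Theorem \ref{mupNoTieneSucesionesConvergentes}) and $\Z$ is countable, Lemma \ref{lemma4.7} shows that every compact subset of $G$ is finite. Hence the compact-open topology on $G^\wedge$ coincides with the topology of pointwise convergence; and since a character of $\Z$ is determined by its value at $1$, the map $\chi\mapsto\chi(1)$ identifies $G^\wedge$ with the subgroup $\Zbinfty\subseteq\T$ carrying the topology induced from $\T$. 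Because $b_n\to\infty$, the elements $\tfrac{1}{b_n}+\Z$ tend to $0$, so $\Zbinfty$ is dense in $\T$ and its completion is $\T$.

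Next I would compute $G^{\wedge\wedge}=(\Zbinfty,\tau_\T)^\wedge$ as an abstract group. Every continuous character of the dense subgroup $\Zbinfty$ extends uniquely to a continuous character of its completion $\T$, and $\T^\wedge=\Z$ consists of the maps $x\mapsto mx$ with $m\in\Z$. Restriction to $\Zbinfty$ is injective, since for $m\neq0$ we have $b_n\nmid m$ for large $n$, so $\tfrac{m}{b_n}+\Z\neq 0$. Therefore $G^{\wedge\wedge}\cong\Z$ algebraically, the isomorphism sending $m$ to the character $\chi_m:\Zbinfty\to\T$ given by $\chi_m(t)=mt$; under this identification $\alpha_G$ becomes the identity of $\Z$.

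The heart of the proof is to show that the compact-open topology on $G^{\wedge\wedge}$ is discrete, i.e. that $\{\chi_0\}$ is open, and for this I would exhibit a single compact set $K\subseteq\Zbinfty$ whose polar $K\tr$ is trivial. Let $L$ be a bound with $q_n\leq L$ for all $n$ and put $$K:=\{0\}\cup\left\{\tfrac{a}{b_n}+\Z:1\leq a\leq L,\ n\geq1\right\}.$$ For each fixed $a$ the points $\tfrac{a}{b_n}+\Z$ converge to $0$, so $K$ has $0$ as its only accumulation point and is compact. Given $m\neq0$, let $n_0$ be the largest index with $b_{n_0}\mid m$ (this exists since $b_n\to\infty$), so that $b_{n_0+1}\nmid m$; writing $q=q_{n_0+1}\leq L$ and $m=b_{n_0}m'$, we get $q\nmid m'$. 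Then for $t_a=\tfrac{a}{b_{n_0+1}}+\Z\in K$ one computes $\chi_m(t_a)=\tfrac{m'a}{q}+\Z$. Setting $r\equiv m'\pmod q$ with $1\leq r\leq q-1$ and $d=\gcd(r,q)$, the residues $ra\bmod q$ run through all multiples of $d$ as $a$ ranges over $\{1,\dots,q-1\}$; since $d$ is a proper divisor of $q$ we have $d\leq q/2$, so some such multiple lies in the open interval $(q/4,3q/4)$, yielding $a\in\{1,\dots,q-1\}\subseteq\{1,\dots,L\}$ with $\chi_m(t_a)\notin\T_+$. Hence $\chi_m\notin K\tr$ for every $m\neq0$, so $K\tr=\{\chi_0\}$ and $G^{\wedge\wedge}$ is discrete.

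Finally, $G^{\wedge\wedge}\cong(\Z,\text{discrete})$ is metrizable, whereas $G=(\Z,\Gammab)$ is not metrizable by Theorem \ref{mupNoTieneSucesionesConvergentes}; since a topological isomorphism preserves metrizability, $\alpha_G$ cannot be a topological isomorphism and $G$ is not reflexive. I expect the only delicate point to be the number-theoretic verification that $K\tr$ is trivial (the choice of $K$ and the residue computation using bounded ratios); the rest is a routine assembly of the cited lemmas. It is worth remarking that here $\alpha_G$ fails to be even continuous, which is consistent with $G$ having only finite compact subsets yet not being discrete: such a $G$ is not a $k$-space, so Ascoli's theorem does not force the compact subsets of $G^\wedge$ to be equicontinuous.
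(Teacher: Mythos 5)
Your proof is correct, and its skeleton is the paper's own: you use Lemma \ref{LemaDualContable}, Theorem \ref{mupNoTieneSucesionesConvergentes} and Lemma \ref{lemma4.7} to identify $G^\wedge$ with $\Zbinfty$ carrying the topology induced from $\T$, and then show that the dual of that group is $\Z$ discrete. The divergence is in how this last step is justified. The paper disposes of it with a single citation to Au\ss{}enhofer's dense-subgroup duality theorem (\cite[4.5]{tesislydia}): since $\Zbinfty$ is dense in $\T$, it has the same dual group as $\T$, algebraically and topologically. You instead prove it by hand: the algebraic identification $(\Zbinfty)^\wedge\cong\Z$ via unique extension of characters to the completion $\T$, and discreteness via an explicit compact set $K\subset\Zbinfty$ (a finite union of null sequences, compact as claimed) whose polar is trivial. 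Your residue argument there is sound: $d=\gcd(r,q)$ is a proper divisor of $q$, hence $d\leq q/2$, so some multiple of $d$ lies in the open interval $(q/4,3q/4)$, and every nonzero multiple of $d$ modulo $q$ is realized as $ra \bmod q$ with $1\leq a\leq q-1\leq L$; this is exactly the kind of numerical work the paper carries out in Lemmas \ref{lema1}--\ref{lema3} for the Graev topologies. The trade-off is clear: the citation route is shorter and does not need the bounded-ratios hypothesis at this step (density of $\Zbinfty$ in $\T$ suffices), whereas your argument is self-contained and makes visible where boundedness of the ratios enters (the size $L$ of the compact set $K$). Your concluding step is also slightly stronger than the paper's: the paper observes that $\alpha_G$ itself is an open, non-continuous isomorphism, while your metrizability mismatch shows that $(\Z,\Gammab)$ and its bidual are not topologically isomorphic under \emph{any} map, which of course implies non-reflexivity.
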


\begin{proof}

The dual group of $(\Z, \Gammab)$ has supporting set $\Zbinfty$. The $\Gammab$-compact subsets of $\Z$ are finite by Lemma $\ref{lemma4.7}$, therefore, the dual group carries the pointwise convergence topology. Thus, $G^\wedge$ is exactly $\Zbinfty$ with the topology induced by the euclidean of $\T$. By \cite[4.5]{tesislydia}, the group $\Zbinfty$ has the same dual group as $\T$, namely, $\Z$ with the discrete topology. We conclude that the canonical mapping $\alpha_G$ is an open non-continuous isomorphism.

\end{proof}

\begin{corollary}

Since $(\Z,\Gammab)^{\wedge\wedge}$ is discrete, the homomorphism $\alpha_{(\Z,\Gammab)}$ is not continuous. However, it is an open algebraic isomorphism.

\end{corollary}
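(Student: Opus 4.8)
The plan is to read both assertions directly off Proposition~\ref{noreflex}. There it is already shown that $G^\wedge$ is algebraically $\Zbinfty$, that the $\Gammab$-compact subsets of $\Z$ are finite, and hence that $G^{\wedge\wedge}$ is $\Z$ equipped with the discrete topology, with $\alpha_G$ a bijective homomorphism. Thus I may treat $\alpha_G$ as an \emph{algebraic} isomorphism from $(\Z,\Gammab)$ onto $\Z$ carrying the discrete topology; it only remains to examine continuity and openness, and each of these is a short point-set argument.

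For the failure of continuity I would first record that $\Gammab$ is not discrete. By Lemma~\ref{LemaDualContable} its dual is the countable group $\Zbinfty$, whereas the discrete topology on $\Z$ has dual $\T$; since $\Zbinfty\neq\T$, the topology $\Gammab$ is strictly coarser than the discrete one, so $\{0\}$ is not $\Gammab$-open. Now if $\alpha_G$ were continuous, then, since $\{0\}$ is open in the discrete group $G^{\wedge\wedge}$ and $\alpha_G$ is injective, the preimage $\alpha_G^{-1}(\{0\})=\{0\}$ would have to be a $\Gammab$-neighbourhood of $0$, forcing $\Gammab$ to be discrete — a contradiction. Hence $\alpha_G$ is not continuous.

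For openness I would exploit that the inverse map is automatically continuous: $\alpha_G^{-1}$ is defined on the discrete group $G^{\wedge\wedge}$, and every map out of a discrete space is continuous. A bijection whose inverse is continuous is an open map, so $\alpha_G$ is open. Combined with the algebraic isomorphism supplied by Proposition~\ref{noreflex}, this yields exactly the claim that $\alpha_G$ is an open algebraic isomorphism which is not continuous.

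There is no genuinely hard step here; the corollary is essentially a repackaging of the last sentence of Proposition~\ref{noreflex}. The only point that needs care is the justification that $\Gammab$ is non-discrete, and I would make sure to argue this via the dual groups (the discrete topology on $\Z$ has dual $\T$, not $\Zbinfty$) rather than appeal vaguely to ``non-discreteness'', so that the contradiction driving the continuity argument is airtight.
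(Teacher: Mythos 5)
Your proposal is correct and takes essentially the same route as the paper: the paper gives no separate proof of this corollary, treating it as an immediate restatement of the final sentence of the proof of Proposition \ref{noreflex} (``$\alpha_G$ is an open non-continuous isomorphism''). Your write-up merely makes explicit the routine details the paper leaves implicit --- non-discreteness of $\Gammab$ via the countability of its dual group (versus $\T$ for discrete $\Z$), non-continuity from the injectivity of $\alpha_G$ and discreteness of the codomain, and openness because any bijection onto a discrete group is open --- all of which are sound.
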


\begin{proposition}

Let $\b\in\Dlinfty$ and $G_\gamma:=(\Z,\taub)$. Then $\alpha_{G_\gamma}$ is a non-surjective embedding from $G_{\gamma}$ into $G_\gamma^{\wedge\wedge}$.

\end{proposition}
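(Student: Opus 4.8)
The plan is to prove the two assertions—that $\alpha_{G_\gamma}$ is a topological embedding, and that it fails to be onto—by separate arguments, the first soft and the second carrying the real content. For the embedding I would compute no dual at all; I would only use that $\taub=\tau_{\b}$ is a \emph{metrizable}, \emph{locally quasi-convex} Hausdorff group topology. Metrizability is clear because the sets $\{a\in\Z:\sup_{n}\|a/b_n\|<1/k\}$, $k\in\N$ (here $\|t\|$ denotes the distance from $t$ to the nearest integer), form a countable base at $0$; local quasi-convexity holds because these sets are inverse polars, hence quasi-convex, exactly as for the topologies $\tauc$ of Section~\ref{SeccionKilling} and \cite[Remark 3.3]{AusBar2012}; and Hausdorffness follows from $b_n\to\infty$. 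Granting this, the standard duality theory for metrizable groups applies: metrizability makes every compact subset of $G_\gamma^\wedge$ equicontinuous, whence $\alpha_{G_\gamma}$ is continuous; local quasi-convexity guarantees that the continuous characters separate points, so $\alpha_{G_\gamma}$ is injective, and, via the bipolar identity $(U\tr)\tl=U$ for quasi-convex $U$ together with the compactness of the polar $U\tr$ in $G_\gamma^\wedge$, that $\alpha_{G_\gamma}$ is open onto its image. Thus $\alpha_{G_\gamma}$ is a topological embedding.

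The heart of the statement is the failure of surjectivity, which I would deduce from the \emph{non-completeness} of $(\Z,\taub)$; this is the step where the hypothesis $\b\in\Dlinfty$ is essential. A direct computation gives $\sup_{n}\|b_N/b_n\|=1/q_{N+1}$: the terms with $n\le N$ vanish, while those with $n>N$ lie in $(0,1/2]$ and are maximised at $n=N+1$. Now $\b\in\Dlinfty$ means $b_{n+\ell}/b_n\to\infty$ for some fixed $\ell$, i.e. $\prod_{i=n+1}^{n+\ell}q_i\to\infty$, which forces $q_{i_k}\to\infty$ along a subsequence; hence $b_{i_k-1}\to 0$ nontrivially and $\taub$ is \emph{non-discrete}. (By contrast, for bounded-ratio sequences such as $\b=(2^n)$ this topology is discrete, which is why the hypothesis cannot be dropped.) A countable, non-discrete, metrizable Hausdorff topological group is never complete: were it complete it would be Polish, and the Baire category theorem applied to the countable closed cover of $\Z$ by its singletons would produce an isolated point, forcing the group to be discrete by homogeneity—a contradiction.

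To conclude I would combine the parts. Since $\alpha_{G_\gamma}$ is already an embedding, surjectivity would upgrade it to a topological isomorphism $G_\gamma\cong G_\gamma^{\wedge\wedge}$, making $G_\gamma$ reflexive; but $G_\gamma^{\wedge\wedge}$, as the dual of the metrizable (hence complete and hemicompact) group $G_\gamma^\wedge$, is itself complete, and a group topologically isomorphic to a complete group is complete, contradicting the previous paragraph. Equivalently, and more transparently, one may pass to the completion $\wt{G_\gamma}$: every $\taub$-continuous character extends uniquely, so $\wt{G_\gamma}^\wedge=G_\gamma^\wedge$ and $\alpha_{\wt{G_\gamma}}$ restricts to $\alpha_{G_\gamma}$ on $\Z$; since $\wt{G_\gamma}$ is metrizable and locally quasi-convex its characters separate points, so $\alpha_{\wt{G_\gamma}}$ is injective, and the strict inclusion $\Z\subsetneq\wt{G_\gamma}$ yields $\alpha_{G_\gamma}(\Z)\subsetneq\alpha_{\wt{G_\gamma}}(\wt{G_\gamma})\subseteq G_\gamma^{\wedge\wedge}$.

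The embedding is routine and the arithmetic identity $\sup_n\|b_N/b_n\|=1/q_{N+1}$ is mechanical, so I expect the main obstacle to be the non-completeness argument: one must squeeze non-discreteness of $\taub$ out of the comparatively weak hypothesis $\b\in\Dlinfty$, and then recognise that the only legitimate route to non-surjectivity is to play the non-completeness of $G_\gamma$ against the completeness of its bidual (or of its completion), since for a non-reflexive group the image of $\alpha$ is genuinely smaller than $G_\gamma^{\wedge\wedge}$.
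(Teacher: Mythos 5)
Your proposal is correct, and its main line is the same as the paper's: the embedding is obtained from metrizability (continuity of $\alpha_{G_\gamma}$) together with local quasi-convexity and separation of points (injectivity and openness onto the image; the paper simply cites \cite[6.10]{tesislydia} where you re-derive it), and non-surjectivity is proved by contradiction, since surjectivity would make $G_\gamma$ reflexive, hence complete, which the Baire category theorem forbids for a countable, non-discrete, metrizable group. Two remarks. First, your parenthetical claim that $G_\gamma^\wedge$ is \emph{metrizable} is false, not just unjustified: $G_\gamma^\wedge$ is a countable, hemicompact, non-discrete group (it cannot be discrete, since then $G_\gamma^{\wedge\wedge}$ would be compact and the embedding would force $\taub$ to be precompact, i.e.\ equal to $\lambdab$), and a hemicompact first-countable group is locally compact, hence discrete when countable. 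The correct chain, and the one the paper uses, is: the dual of a metrizable group is a $k$-space, and the dual of a $k$-space is complete, so $G_\gamma^{\wedge\wedge}$ is complete. Your alternative argument via the completion $\wt{G_\gamma}$ (Chasco's theorem that a metrizable group and its completion have topologically the same dual, plus the fact that the completion of a locally quasi-convex group is locally quasi-convex, so $\alpha_{\wt{G_\gamma}}$ is injective) is a valid substitute that bypasses this issue, so the proof as a whole stands.

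Second, your explicit verification that $\taub$ is non-discrete when $\b\in\Dlinfty$ --- via $\sup_n\|b_N/b_n\|=1/q_{N+1}$ and extraction of a subsequence with $q_{i_k}\to\infty$, giving $b_{i_k-1}\to 0$ in $\taub$ --- is a genuine addition. The paper's proof needs non-discreteness (this is precisely where the hypothesis $\b\in\Dlinfty$ enters) but leaves it implicit, and your observation that $\taub$ is discrete for bounded-ratio sequences such as $\b=(2^n)$ correctly explains why the hypothesis cannot be dropped.
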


\begin{proof}

Since $\taub$ is metrizable, $\alpha_{G_\gamma}$ is continuous. The fact that $\taub$ is locally quasi-convex and $\Zbinfty$ separates points of $\Z$ imply that $\alpha_{G_\gamma}$ is injective and open in its image \cite[6.10]{tesislydia}. On the other hand, $\alpha_{G_\gamma}$ is not onto, for otherwise $G_\gamma$ would be reflexive. However, a non-discrete countable metrizable group cannot be reflexive. Indeed, the dual of a metrizable group is a $k$-space (\cite{tesislydia,Cha1998}) and the dual group of a $k$-space is complete. Hence, the original group must be complete as well. By Baire Category Theorem, the only metrizable complete group topology on a countable group is the discrete one.

\end{proof}

\section{The complete topology $\Graev{b_n}$.}\label{SeccionGraev}

In this section we consider some special families of topologies which were introduced by Graev and deeply studied in the group of the integers by Protasov and Zelenyuk (\cite{ProZel1990,ProZel1999}). We need first some definitions about sequences. All sequences considered will be without repeated terms.

\begin{definition}

Let $G$ be a group and $\mathbf{g}=(g_n)\subset G$ a sequence of elements of $G$. We say that $\mathbf{g}$ is a $T$-sequence if there exists a \textbf{Hausdorff} group topology $\tau$ such that $g_n\stackrel{\tau}{\rightarrow}0$.

\end{definition}

\begin{lemma}

Let $\mathbf{g}\subset G$ be a $T$-sequence, then there exists the finest group topology $\mathcal{T}_{\{g_n\}}$ satisfying that $g_n\stackrel{\Graev{g_n}}{\rightarrow}0$.

\end{lemma}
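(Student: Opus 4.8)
The plan is to realize $\Graev{g_n}$ as the supremum of all group topologies in which the sequence $\mathbf{g}$ converges to $0$. Concretely, I would set
$$\mathcal{F} := \{\tau : \tau \mbox{ is a group topology on } G \mbox{ and } g_n \stackrel{\tau}{\rightarrow} 0\}.$$
Since $\mathbf{g}$ is a $T$-sequence, there is by definition at least one (Hausdorff) group topology in which $g_n\to 0$, so $\mathcal{F}\neq\emptyset$. As the group topologies on a fixed group $G$ form a complete lattice, the supremum $\mathcal{T}:=\sup\mathcal{F}$ exists and is again a group topology. By construction $\mathcal{T}$ is finer than every member of $\mathcal{F}$, so it only remains to check that $\mathcal{T}\in\mathcal{F}$, i.e. that $g_n\stackrel{\mathcal{T}}{\rightarrow}0$; once this is done I would set $\Graev{g_n}:=\mathcal{T}$, which is then the finest group topology with the required convergence. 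Uniqueness of a finest element is automatic, so no separate argument is needed for ``the''.

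The one point that genuinely requires verification is the preservation of convergence under the supremum. First I would recall that a neighborhood basis of $0$ for $\mathcal{T}=\sup\mathcal{F}$ is given by the finite intersections $U_{\tau_1}\cap\cdots\cap U_{\tau_k}$, where $k\in\N$, $\tau_1,\dots,\tau_k\in\mathcal{F}$, and each $U_{\tau_i}$ is a $\tau_i$-neighborhood of $0$. Given such a basic neighborhood, for each $i$ the convergence $g_n\stackrel{\tau_i}{\rightarrow}0$ supplies an index $N_i$ with $g_n\in U_{\tau_i}$ for all $n\geq N_i$; taking $N=\max\{N_1,\dots,N_k\}$ yields $g_n\in U_{\tau_1}\cap\cdots\cap U_{\tau_k}$ for all $n\geq N$. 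Hence $g_n\stackrel{\mathcal{T}}{\rightarrow}0$, so $\mathcal{T}\in\mathcal{F}$, and the construction is complete.

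I do not expect a serious obstacle: the only delicate step is the one just described, and it works precisely because basic neighborhoods in a supremum of topologies are \emph{finite} (not arbitrary) intersections, so finitely many tail conditions can be satisfied simultaneously. For full rigor I would make two small remarks explicit. First, $\mathcal{F}$ is a genuine set, since every topology on $G$ is a subset of the power set of $G$, so that forming $\sup\mathcal{F}$ is legitimate and no proper-class issue arises. Second, $\mathcal{T}$ is automatically Hausdorff, being finer than the Hausdorff topology witnessing that $\mathbf{g}$ is a $T$-sequence; this is not required by the statement, which asks only for a group topology, but it is worth noting since it guarantees $\Graev{g_n}$ inherits the separation of the witnessing topology.
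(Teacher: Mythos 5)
Your proposal is correct, but it takes a different route from the paper. The paper does not actually prove this lemma by an abstract argument: it states it as a known fact going back to Graev and Protasov--Zelenyuk, and the real content appears in the proposition immediately following, where an explicit neighborhood basis of $0$ for $\Graev{a_n}$ is exhibited, namely the sets $V_{(n_i)}=\bigcup_{k=1}^\infty[n_1,\dots,n_k]$ built from tails of the sequence; existence of the finest topology with $a_n\to 0$ is then a byproduct of that concrete construction. Your argument instead proves bare existence lattice-theoretically: take $\mathcal{T}=\sup\mathcal{F}$ over the (nonempty, set-sized) family $\mathcal{F}$ of all group topologies in which $g_n\to 0$, and observe that convergence survives the supremum because a basic $\mathcal{T}$-neighborhood of $0$ is a \emph{finite} intersection $U_{\tau_1}\cap\cdots\cap U_{\tau_k}$ of neighborhoods from members of $\mathcal{F}$, so finitely many tail conditions can be met simultaneously; this is exactly the right key point, and your remarks on Hausdorffness and on uniqueness of the finest element are also correct (the paper works with abelian groups, so no issue with conjugation arises in forming the supremum). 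What each approach buys: yours is shorter, more elementary, and works for an arbitrary $T$-sequence in any abelian group without any combinatorial bookkeeping, but it gives no usable description of $\Graev{g_n}$; the Protasov--Zelenyuk construction underlying the paper yields the explicit basis $\{V_{(n_i)}\}$, which is indispensable for everything the paper does afterwards (computing the polars $V_{(n_i)}\tr$, showing they are finite, and identifying the dual group and the locally quasi-convex modification). So your proof establishes the lemma as stated, but could not replace the constructive description in the rest of the paper's argument.
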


Since we can describe $\Graev{g_n}$ we are interested in finding a suitable neighborhood basis for this topology.

\begin{definition}\cite{ProZel1999}

Let $G$ be a group and let $a=(a_n)$ be a $T$-sequence in $G$ and $(n_i)_{i\in \N}$ a sequence of natural numbers. We define:

\begin{itemize}

\item $A^*_m:=\{\pm a_n|n\geq m\}\cup\{0_G\}$.

\item $A(k,m):=\{g_0+\cdots +g_k| g_i\in A^*_m\, i\in\{0,\dots ,k\}\}$.

\item $[n_1,\dots, n_k]:=\{g_1+\cdots + g_k: g_i\in A^*_{n_i}, i=1,\dots, k\}$.

\item $\dis V_{(n_i)}=\bigcup^\infty_{k=1}[n_1,\dots,n_k]$.

\end{itemize}
\end{definition}

\begin{proposition}

The family $\{V_{(n_i)}:(n_i)\in \N^\N\}$ is a neighborhood basis of $0_G$ for a group topology $\Graev{a_n}$ on $G$, which is the finest among all those satisfying $a_n\rightarrow 0$. The symbol $G_{\{a_n\}}$ will stand for the group $G$ endowed with $\mathcal{T}_{\{a_n\}}$.

\end{proposition}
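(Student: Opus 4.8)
The plan is to split the statement into two assertions: first, that the family $\{V_{(n_i)}:(n_i)\in\N^\N\}$ satisfies the classical axioms making it a neighborhood basis of $0_G$ for a unique group topology on $G$; and second, that the resulting topology $\Graev{a_n}$ is the finest group topology for which $a_n\to 0$. For the first assertion I would invoke the standard criterion for building a group topology from a filter base at the identity: on an abelian group, a family of subsets each containing $0_G$ is such a neighborhood basis provided it is a filter base, is symmetric up to refinement, and absorbs addition, i.e.\ for each member $U$ there is a member $V$ with $V+V\subseteq U$. Three of these are immediate. Every $[n_1,\dots,n_k]$ contains $0_G$ (take all summands $0_G$), so $0_G\in V_{(n_i)}$. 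Each $A^*_m$ is symmetric, hence so are all $[n_1,\dots,n_k]$ and therefore $-V_{(n_i)}=V_{(n_i)}$. Finally, given $(n_i)$ and $(n_i')$, setting $r_i:=\max(n_i,n_i')$ yields $A^*_{r_i}\subseteq A^*_{n_i}\cap A^*_{n_i'}$ and hence $V_{(r_i)}\subseteq V_{(n_i)}\cap V_{(n_i')}$, so the family is a filter base.

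The crux, and the step I expect to be the main obstacle, is the addition axiom. Given $(n_i)$, I may assume after replacing each $n_i$ by $\max(n_1,\dots,n_i)$ that $(n_i)$ is non-decreasing, since this only shrinks $V_{(n_i)}$ and it suffices to reach the smaller set. I would then put $m_i:=n_{2i}$ and show $V_{(m_i)}+V_{(m_i)}\subseteq V_{(n_i)}$ by interleaving. If $x\in[m_1,\dots,m_k]$ and $y\in[m_1,\dots,m_l]$, write $x=g_1+\cdots+g_k$ and $y=h_1+\cdots+h_l$ with $g_i,h_i\in A^*_{m_i}$, and reorder the combined sum as $g_1,h_1,g_2,h_2,\dots$, placing $g_i$ in position $2i-1$ and $h_i$ in position $2i$, with the leftover terms appended when $k\neq l$. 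Since $(n_i)$ is non-decreasing and $m_i=n_{2i}\geq n_{2i-1}$, we get $g_i\in A^*_{m_i}\subseteq A^*_{n_{2i-1}}$ and $h_i\in A^*_{m_i}=A^*_{n_{2i}}$, while a leftover term coming from $A^*_{m_i}$ and landing in position $p$ satisfies $m_i\geq n_p$ by monotonicity; hence $x+y\in[n_1,\dots,n_{k+l}]\subseteq V_{(n_i)}$. The bookkeeping for unequal lengths $k\neq l$ is precisely where monotonicity of $(n_i)$ is needed. With all axioms in place, the cited criterion produces the group topology $\Graev{a_n}$.

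For the second assertion I would first verify $a_n\to 0$: for any basic neighborhood $V_{(n_i)}$, every $a_n$ with $n\geq n_1$ lies in $A^*_{n_1}=[n_1]\subseteq V_{(n_i)}$, so the tail of $(a_n)$ is eventually inside $V_{(n_i)}$. To see that $\Graev{a_n}$ is finest, let $\tau$ be any group topology on $G$ with $a_n\stackrel{\tau}{\to}0$ and let $W$ be a $\tau$-neighborhood of $0_G$; it suffices to find $(n_i)$ with $V_{(n_i)}\subseteq W$, for then every $\tau$-neighborhood of $0_G$ is a $\Graev{a_n}$-neighborhood and, by translation invariance, $\tau\leq\Graev{a_n}$. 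Using continuity of addition I would choose symmetric $\tau$-neighborhoods $W=W_0\supseteq W_1\supseteq\cdots$ with $W_{k+1}+W_{k+1}\subseteq W_k$, and using $a_n\to 0$ pick indices $n_k$ with $A^*_{n_k}\subseteq W_k$. A routine telescoping induction gives $W_1+W_2+\cdots+W_j\subseteq W_0=W$ for every $j$, whence each $[n_1,\dots,n_j]\subseteq W_1+\cdots+W_j\subseteq W$ and therefore $V_{(n_i)}=\bigcup_j[n_1,\dots,n_j]\subseteq W$, which completes the argument.
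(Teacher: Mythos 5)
Your proposal is correct, but there is nothing in the paper to compare it against: the paper states this proposition without proof, as imported background from Protasov and Zelenyuk \cite{ProZel1999} (only later results about $\Graev{b_n}$ are proved in the paper itself). So you have supplied a complete argument where the paper supplies none. Checking it on its own merits: the filter-base, symmetry, and $0_G$-membership verifications are right; the crux, producing $V$ with $V+V\subseteq U$, is handled soundly by monotonizing $(n_i)$ (which only shrinks $V_{(n_i)}$) and interleaving with $m_i:=n_{2i}$, and your leftover bookkeeping works because a leftover $g_i$ ($i>l$) lands at position $l+i\leq 2i-1$, where monotonicity gives $A^*_{n_{2i}}\subseteq A^*_{n_{l+i}}$; note that the reordering of summands silently uses commutativity of $G$, which the paper does assume throughout. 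A simplification worth recording: since $0_G\in A^*_m$ for every $m$, you may pad the shorter of the two sums with zero summands so that $k=l$; this eliminates the leftover case entirely, and then no monotonization is needed at all --- interleaving gives $V_{(n_{2i-1})}+V_{(n_{2i})}\subseteq V_{(n_i)}$ directly, and one takes $m_i:=\max(n_{2i-1},n_{2i})$. Your second half ($a_n\to 0$ via $A^*_{n_1}=[n_1]$, and maximality via the telescoping chain $W_{k+1}+W_{k+1}\subseteq W_k$ together with $[n_1,\dots,n_j]\subseteq W_1+\cdots+W_j\subseteq W$) is the standard argument and is complete; combined with translation invariance it indeed yields $\tau\leq\Graev{a_n}$. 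One cosmetic remark: Hausdorffness of $\Graev{a_n}$ is not part of the claim, but in the paper's context $(a_n)$ is a $T$-sequence, so it follows immediately from maximality, since $\Graev{a_n}$ refines some Hausdorff group topology in which $a_n\to 0$.
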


Next, we include a theorem of great interest from $\cite{ProZel1999}$:

 \begin{theorem}\cite[Theorem 2.3.11]{ProZel1999}\label{TopologiasDeProtasovSonCompletas}

 Let $(g_n)$ be a $T$-sequence on a group $G$. Then, the topology $\Graev{g_n}$ is complete.

 \end{theorem}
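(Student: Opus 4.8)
The plan is to verify (Raikov) completeness directly, by showing that every Cauchy filter on $(G,\Graev{g_n})$ converges to a point of $G$. Since $G$ is abelian, the left and right uniformities coincide, so a filter $\mathcal F$ is Cauchy precisely when for every basic neighborhood $V_{(n_i)}$ of $0_G$ there is $F\in\mathcal F$ with $F-F\subseteq V_{(n_i)}$. I would use the standard fact that in a Hausdorff topological group a Cauchy filter converges as soon as it has a single cluster point, and that the limit is then unique; hence the whole task reduces to producing one adherent point of $\mathcal F$ lying in $G$. I emphasise that, because $\Graev{g_n}$ is in general not metrizable, completeness does not reduce to sequential completeness: the argument must be run for an arbitrary Cauchy filter, from which a sequence of base points will afterwards be extracted.

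First I would record the two structural features of the basis $\{V_{(n_i)}\}$ that drive everything. The first is subadditivity by concatenation: since $[n_1,\dots,n_k]+[n_{k+1},\dots,n_{k+l}]\subseteq[n_1,\dots,n_{k+l}]$, one obtains, for index sequences growing fast enough, inclusions of the form $V_{(m_i)}-V_{(m_i)}\subseteq V_{(n_i)}$, which is exactly what makes the $V_{(n_i)}$ a neighborhood basis of a group topology. The second is Hausdorffness: because $(g_n)$ is a $T$-sequence, $\bigcap_{(n_i)}V_{(n_i)}=\{0_G\}$, so every nonzero $x\in G$ is separated from $0_G$ by some $V_{(n_i)}$. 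Using the Cauchy condition I would then choose, by a diagonal construction over faster and faster index sequences $(n^{(j)}_i)_i$, sets $F_j\in\mathcal F$ small of order $V_{(n^{(j)}_i)}$ together with base points $x_j\in F_j$, arranged so that $x_{j+1}-x_j$ lies in the shrinking neighborhoods $V_{(n^{(j)}_i)}$; then $(x_j)$ is a Cauchy sequence in $G$, and by subadditivity any limit of $(x_j)$ in $G$ is automatically a limit of the whole filter $\mathcal F$.

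The heart of the proof, and the step I expect to be the main obstacle, is to show that this Cauchy sequence of base points already converges to an element $x_0\in G$, i.e.\ that the completion adds no new points. Here the explicit form of the neighborhoods is indispensable: a would-be limit outside $G$ would correspond to increments $x_{j+1}-x_j$ realized by longer and longer $\pm g_n$-sums whose generator indices tend to infinity, and the point is that such accumulation is incompatible with the Cauchy condition. Concretely, I would argue by contradiction: assuming $\mathcal F$ has no cluster point in $G$, I would combine the separation property with the concatenation estimates to manufacture a single index sequence $(n_i)$ for which no translate $x+V_{(n_i)}$ contains any member of $\mathcal F$, contradicting Cauchyness at $V_{(n_i)}$. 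The delicate part is the bookkeeping in this diagonalization, since one must prevent the sums defining the $V_{(n_i)}$ from silently accumulating as the indices are pushed to infinity; this is precisely the combinatorial content special to the Graev--Protasov--Zelenyuk topologies. Once the limit $x_0\in G$ is secured, the reduction in the first paragraph finishes the proof.
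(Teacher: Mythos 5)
The paper itself offers no proof of Theorem \ref{TopologiasDeProtasovSonCompletas}: it is quoted from \cite[Theorem 2.3.11]{ProZel1999} and used as a black box, so your attempt has to stand as a self-contained proof, and it does not. Your first paragraph (a Cauchy filter converges iff it has a cluster point) is fine, but the sequential reduction in your second paragraph is a genuine gap, and in fact false. You choose countably many members $F_j\in\mathcal F$, each small of order some $V_{(n^{(j)}_i)}$, pick base points $x_j\in F_j$, and claim that $(x_j)$ is Cauchy and that any limit of $(x_j)$ is automatically a limit of $\mathcal F$. Both claims must be tested against \emph{every} neighborhood $V_{(n_i)}$, $(n_i)\in\N^\N$, whereas your construction controls only the countably many chosen ones; they would follow only if the family $\{V_{(n^{(j)}_i)}:j\in\N\}$ were cofinal in the neighborhood filter at $0$, and no countable family is cofinal: already for $G=\Z$ the topology $\Graev{g_n}$ is not first countable, since by Remark \ref{kW}(1) it is sequential but not Frechet-Urysohn, while every first countable space is Frechet-Urysohn (note this does not rely on the completeness you are trying to prove). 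The failure is concrete. Take $G=\Z$, $g_n=3^n$, and let $\mathcal F$ be the filter of all neighborhoods of $0$, which is Cauchy and converges to $0$. Whatever sets $F_j\in\mathcal F$ your procedure selects, each is a neighborhood of $0$ and therefore contains elements of the form $x_j=3^{m^{(j)}_1}+\dots+3^{m^{(j)}_j}$ with $m^{(j)}_1<\dots<m^{(j)}_j$; these are legitimate base points, and if the $F_j$ are taken nested they satisfy your increment condition as well. Now write integers in balanced ternary (digits in $\{-1,0,1\}$, unique): since adding a single summand $\pm 3^e$ changes the number of nonzero digits by at most one, and since for $(n_i)$ strictly increasing at most $r-1$ summands of an element of $[n_1,\dots,n_k]$ have exponent below $n_r$, every element of $V_{(n_i)}$ has at most $r-1$ nonzero digits below position $n_r$. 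Choosing $(n_i)$ strictly increasing with $n_j>m^{(j)}_j$ for all $j$, all $j$ digits of $x_j$ lie below position $n_j$, so $x_j\notin V_{(n_i)}$ for every $j$. Thus the base points stay uniformly outside a fixed neighborhood of the filter's actual limit, are not Cauchy, and converge nowhere; no condition formulated in terms of countably many neighborhoods can repair this, because of the same cofinality obstruction.

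The second gap is that the step you yourself call the heart of the proof is never performed: ``manufacture a single index sequence $(n_i)$ for which no translate $x+V_{(n_i)}$ contains any member of $\mathcal F$'' is a restatement of the goal, not an argument, and the ``delicate bookkeeping'' you defer is precisely the entire combinatorial content of the Protasov--Zelenyuk theorem. What your proposal actually establishes are generalities valid in any topological group. If you want a route you can complete, there are two: (i) reproduce the direct analysis of Cauchy filters against the $\N^\N$-indexed basis as in \cite{ProZel1999}; or (ii) observe that the subgroup generated by $\{g_n\}$ is open in $\Graev{g_n}$ (all the sets $V_{(n_i)}$ lie in it), prove that this subgroup is a $k_\omega$-group, being the inductive limit of the compact sets $A(k,1)$ (cf.\ Remark \ref{kW}(2)), and then invoke the facts that $k_\omega$ topological groups are complete and that a topological group with a complete open subgroup is complete. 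Either way, the real work sits exactly where your proposal says ``I would''.
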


\begin{corollary}

Let $\b$ be a $D$-sequence and $c\in\Dlinfty(\b)$. Then $\tauc\neq\Graev{b_n}$.

\end{corollary}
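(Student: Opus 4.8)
The plan is to argue by contradiction, pitting the completeness of the Graev topology against the metrizability of $\tauc$. Suppose $\tauc=\Graev{b_n}$. First I would record that $(b_n)$ really is a $T$-sequence: by Lemma \ref{bConvergeEnLambdab} we have $b_n\stackrel{\lambdab}{\to}0$, and $\lambdab$ is a Hausdorff group topology, since its basic neighborhoods $b_n\Z$ satisfy $\bigcap_n b_n\Z=\{0\}$ (every nonzero $k$ has $|k|<b_n$ for large $n$). Hence $\Graev{b_n}$ is well defined and, by Theorem \ref{TopologiasDeProtasovSonCompletas}, complete. Under the assumption $\tauc=\Graev{b_n}$ this forces $\tauc$ to be complete.

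On the other side, $\tauc$ is metrizable: since $\mathbf{c}\in\Dlinfty(\b)\subseteq\Dlinfty$, this is exactly the metrizability used in the proof of Theorem \ref{proposicion20130614} and in the discussion of $\taub$ for $\b\in\Dlinfty$, and it rests on \cite[Remark 3.3]{AusBar2012}. Moreover $\tauc$ is not discrete. Here I would avoid trying to show $c_n\to0$ in $\tauc$, because when $\mathbf{c}\in\Dlinfty\setminus\Dinfty$ the defining set $\underline{\mathbf{c}}$ need not accumulate at $0$ in the uniform-convergence topology, so convergence of $(c_n)$ can fail. Instead I compare dual groups: as $\mathbf{c}\in\Dlinfty(\b)$ we have $\tauc\leq\deltab$, whence $(\Z,\tauc)^\wedge\subseteq(\Z,\deltab)^\wedge=\Zbinfty$, which is countable. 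If $\tauc$ were discrete its dual group would be all of $\T$, which is uncountable; therefore $\tauc$ is non-discrete.

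Combining the two sides, $\tauc$ would be a complete, metrizable, non-discrete group topology on the countable group $\Z$. This contradicts the Baire-category fact already invoked in Section \ref{SeccionGammaB}, namely that the only complete metrizable group topology on a countable group is the discrete one. Hence $\tauc\neq\Graev{b_n}$, as claimed.

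The only genuinely delicate step is the non-discreteness of $\tauc$: membership in $\Dlinfty$ rather than $\Dinfty$ removes the naive argument via convergence of $(c_n)$, so one must certify non-discreteness differently. The comparison $\tauc\leq\deltab$ together with the countability of $\Zbinfty$ settles this cleanly, and every remaining ingredient (completeness of Graev topologies, metrizability of $\tauc$, and the Baire argument) is already available in the preceding sections.
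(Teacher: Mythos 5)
Your overall strategy is exactly the one the paper intends for this corollary (which it states without proof, immediately after Theorem \ref{TopologiasDeProtasovSonCompletas}): play the completeness of $\Graev{b_n}$ against the fact that $\tauc$ is a metrizable, non-discrete group topology on the countable group $\Z$, and let the Baire Category Theorem rule out equality. Most of your ingredients are correctly sourced: $(b_n)$ is a $T$-sequence by Lemma \ref{bConvergeEnLambdab}, $\Graev{b_n}$ is complete by Theorem \ref{TopologiasDeProtasovSonCompletas}, $\tauc$ is metrizable by \cite[Remark 3.3]{AusBar2012}, and by Baire a complete metrizable group topology on a countable group must be discrete.

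The one step that does not stand as written is your certification that $\tauc$ is non-discrete. You derive it from $\tauc\leq\deltab$ together with $(\Z,\deltab)^\wedge=\Zbinfty$; but the corollary assumes only that $\b$ is a $D$-sequence, whereas $\deltab$ is defined in the paper, and its dual group computed (via \cite{TesisDaniel}), only for \emph{basic} $D$-sequences with \emph{bounded ratios}. So for a general $\b$ this justification has no support in the text (admittedly the paper itself is loose on the same point in Lemma \ref{LemaDualContable}). The detour is also unnecessary: since you argue by contradiction, you can harvest non-discreteness from the Graev side rather than the $\tauc$ side. Under the assumption $\tauc=\Graev{b_n}$, the sequence $(b_n)$ converges to $0$ in $\tauc$ by the defining property of the Graev topology; since $b_n\mid b_{n+1}$ and $b_n\neq b_{n+1}$, the terms $b_n$ are strictly increasing, hence pairwise distinct and nonzero, so $\tauc$ admits a nontrivial convergent sequence and cannot be discrete. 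Note that this does not conflict with your (correct) observation that $c_n\to 0$ may fail in $\tauc$ when $\mathbf{c}\in\Dlinfty\setminus\Dinfty$: it is the full sequence $(b_n)$, not the subsequence $\mathbf{c}$, that converges in $\Graev{b_n}$. With this replacement your argument is complete, needs no facts about $\deltab$ at all, and proves the corollary for every $D$-sequence $\b$.
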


Applying the Baire Category Theorem, we can obtain the following corollary:

\begin{corollary}

Let $G$ be a countable group. For any $T$-sequence $\mathbf{g}$, the topology $\Graev{g_n}$ is not metrizable.

\end{corollary}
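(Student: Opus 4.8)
The plan is to argue by contradiction, combining the completeness of $\Graev{g_n}$ (Theorem \ref{TopologiasDeProtasovSonCompletas}) with the Baire Category Theorem, exactly as in the reflexivity discussion earlier in this section. Suppose that $\Graev{g_n}$ were metrizable. Then $(G,\Graev{g_n})$ would be a countable, complete, metrizable topological group, and I would show that any such group must be discrete.

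For this reduction, write $G=\{x_k:k\in\N\}$. Each singleton $\{x_k\}$ is closed. If no point of $G$ were isolated, then every $\{x_k\}$ would be a closed set with empty interior, hence nowhere dense, and $G=\bigcup_{k}\{x_k\}$ would be a countable union of nowhere dense sets, i.e. meager in itself. This contradicts the Baire Category Theorem, since a nonempty complete metric space is never meager in itself. Therefore $G$ has an isolated point; by homogeneity (every translation is a homeomorphism of a topological group), every point is then isolated, so $(G,\Graev{g_n})$ is discrete.

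It remains to contradict discreteness. By the defining property of the Graev topology, $g_n\stackrel{\Graev{g_n}}{\rightarrow}0$, and since the $T$-sequence $\mathbf{g}$ has no repeated terms, at most one of its entries equals $0$; hence infinitely many \emph{nonzero} terms converge to $0$. Thus $0$ is a non-isolated point and $\Graev{g_n}$ is not discrete, contradicting the previous paragraph. Consequently $\Graev{g_n}$ cannot be metrizable.

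The only substantive step is the reduction to discreteness, namely the Baire-category observation that a countable complete metric space must possess an isolated point; the convergence $g_n\to 0$ with distinct terms then supplies the required non-isolated point, and everything else is formal.
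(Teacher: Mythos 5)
Your proof is correct and follows exactly the route the paper intends: it states the corollary as an application of the Baire Category Theorem to the completeness result (Theorem~2.3.11 of Protasov--Zelenyuk), and the paper itself later invokes the same fact that ``the only metrizable complete group topology on a countable group is the discrete one.'' Your write-up simply fills in the details the paper leaves implicit (the isolated-point argument, homogeneity, and the use of distinct terms of the $T$-sequence to rule out discreteness), all of which are accurate.
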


Since we are mainly interested on topologies on the group of integers, we include the following results:

\begin{proposition}\cite[Theorem 2.2.1]{ProZel1999}

If $\lim_{n\rightarrow\infty}\frac{a_{n+1}}{a_n}=\infty$ then $(a_n)$ is a $T$-sequence in $\Z$.

\end{proposition}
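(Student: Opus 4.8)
The plan is to exhibit a single Hausdorff group topology on $\Z$ for which $a_n\to 0$; by the definition of a $T$-sequence this is all that is required. The most economical way to build one is to transport the topology of $\T$ along a monomorphism. Suppose we can find an irrational $\theta\in\R$ such that $a_n\theta+\Z\to\Z$ in $\T$. Then the map $\phi\colon\Z\to\T$ given by $k\mapsto k\theta+\Z$ is an injective group homomorphism (injective because $\theta$ is irrational, so $\phi(1)$ has infinite order), and $\T$ is a Hausdorff topological group. The initial topology $\tau$ on $\Z$ induced by $\phi$ is therefore a Hausdorff group topology, and $a_n\to 0$ in $\tau$ precisely because $\phi(a_n)=a_n\theta+\Z\to\Z$. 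Thus the whole statement reduces to the existence of one such $\theta$.

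To construct $\theta$ I will realize it as the limit of a rapidly converging sequence of rationals $p_n/a_n$. Starting from any initial $p_n$, define $p_n$ inductively to be an integer nearest to $a_n\cdot(p_{n-1}/a_{n-1})$, so that $\abs{p_n/a_n-p_{n-1}/a_{n-1}}\le 1/a_n$. Since $a_{n+1}/a_n\to\infty$, the series $\sum_n\left(p_n/a_n-p_{n-1}/a_{n-1}\right)$ converges absolutely and $\theta:=\lim_n p_n/a_n$ exists, with $\theta-p_N/a_N=\sum_{n>N}\left(p_n/a_n-p_{n-1}/a_{n-1}\right)$. Writing $\|x\|$ for the distance from $x\in\R$ to the nearest integer (so that $x+\Z\in\T_m$ iff $\|x\|\le 1/(4m)$, and $x_n+\Z\to\Z$ iff $\|x_n\|\to 0$), and using that $p_N$ is an integer, I obtain
$$\|a_N\theta\|\ \le\ a_N\,\abs{\theta-p_N/a_N}\ \le\ a_N\sum_{n>N}\abs{\tfrac{p_n}{a_n}-\tfrac{p_{n-1}}{a_{n-1}}}\ \le\ \sum_{n>N}\frac{a_N}{a_n}.$$

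Two points need care. First, the hypothesis enters exactly in showing the tail $\sum_{n>N}a_N/a_n$ tends to $0$: since $a_n/a_{n+1}\to 0$, for $N$ large every ratio $a_m/a_{m+1}$ with $m\ge N$ is at most $1/2$, whence $a_N/a_{N+k}\le 2^{-(k-1)}(a_N/a_{N+1})$ and the tail is dominated by a geometric sum of size $2\,a_N/a_{N+1}=o(1)$. This yields $\|a_N\theta\|\to 0$, i.e. $a_N\theta+\Z\to\Z$ in $\T$. Second, we must guarantee that $\theta$ is irrational, which is the only genuinely delicate point. The rounding step admits at least two choices of $p_n$ at each stage (any integer within distance $1$ of $a_n p_{n-1}/a_{n-1}$ respects the bound $\abs{p_n/a_n-p_{n-1}/a_{n-1}}\le 1/a_n$), and two choices that first differ at stage $n$ produce limits differing by at least $1/a_n-4/a_{n+1}=(1-o(1))/a_n>0$; hence the construction yields uncountably many distinct admissible values of $\theta$, so at least one is irrational. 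Choosing such a $\theta$ finishes the argument. I expect this last step, securing irrationality while preserving the approximation property, to be the main obstacle, whereas the convergence estimate is a direct consequence of the super-exponential growth of $(a_n)$.
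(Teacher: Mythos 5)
Your proof is correct, but it follows a genuinely different route from the one the paper points to. The cited proof of Protasov--Zelenyuk stays entirely inside $\Z$: it rests on their combinatorial criterion that $(a_n)$ is a $T$-sequence if and only if for every $k\in\N$ and every $g\neq 0$ there is $m$ with $g\notin A(k,m)$ (the sets $A^*_m$, $A(k,m)$ defined in this section), and verifies it by observing that when $a_{n+1}/a_n\to\infty$ any sum of at most $k+1$ terms $\pm a_{n_i}$ with all indices $\geq m$ is either $0$ or dominated by its largest term, hence of absolute value larger than $\abs{g}$ once $m$ is large; this argument analyzes exactly the neighborhood machinery $V_{(n_i)}$ that the paper uses afterwards, and it generalizes to groups that need not embed in $\T$. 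You instead construct an irrational $\theta=\lim p_n/a_n$ with $\|a_N\theta\|\leq\sum_{n>N}a_N/a_n\to 0$ and pull back the topology of $\T$ along the monomorphism $k\mapsto k\theta+\Z$; your rounding estimates, the geometric tail bound, and the branching argument producing uncountably many admissible limits (hence an irrational one) are all sound. What your approach buys is a strictly stronger conclusion: $a_n\to 0$ in a \emph{precompact}, metrizable, locally quasi-convex Hausdorff topology (in the terminology of the literature the paper cites, $(a_n)$ is a $TB$-sequence, not merely a $T$-sequence), which is very much in the spirit of the topologies of uniform convergence studied in Section \ref{SeccionGammaB}. What it gives up is any information about the finest topology $\Graev{a_n}$ itself, whose neighborhood structure is the object of interest in Section \ref{SeccionGraev}, and it is tied to the existence of sufficiently many characters of $\Z$, whereas the combinatorial criterion is intrinsic.
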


This proposition provides a condition to prove that the following sequences are $T$-sequences: $(b_n)=(2^{n^2})$, $(\ro_n) = (p_1\cdot\dots\cdot p_n)$.

\begin{proposition}\cite[Theorem 2.2.3]{ProZel1999}

If $lim_{n\rightarrow \infty}\frac{a_{n+1}}{a_n}=r$ and $r$ is a transcendental number then $(a_n)$ is a $T$-sequence in $\Z$.

\end{proposition}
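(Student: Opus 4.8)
The plan is to realise $(a_n)$ as a $T$-sequence by showing that its Graev topology $\Graev{a_n}$ is already Hausdorff. By the neighbourhood-basis proposition above, the sets $V_{(n_i)}$ form a basis of neighbourhoods of $0$ for the finest group topology $\Graev{a_n}$ on $\Z$ under which $a_n\to 0$; this topology is Hausdorff precisely when $\bigcap_{(n_i)}V_{(n_i)}=\{0\}$, and $(a_n)$ is a $T$-sequence exactly in that case (any Hausdorff witness is coarser than $\Graev{a_n}$, and a topology finer than a Hausdorff one is Hausdorff). Note first that $|r|>1$ (the limit of ratios of distinct integers cannot have modulus $\le 1$), so $|a_n|\to\infty$. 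Thus it suffices to prove: for every $m\in\Z\setminus\{0\}$ there is a rapidly increasing $(n_i)$ with $m\notin V_{(n_i)}$. Unwinding the definitions of $[n_1,\dots,n_k]$ and $V_{(n_i)}$, membership $m\in V_{(n_i)}$ means that $m$ admits a signed representation $m=\sum_{s=1}^{p}c_sa_{l_s}$ with $l_1>\dots>l_p$, $c_s\in\Z\setminus\{0\}$, whose unit-terms, listed with multiplicity and sorted increasingly as $\mu_1\le\dots\le\mu_T$ (with $T=\sum_s|c_s|$), satisfy the staggering condition $\mu_i\ge n_i$ for all $i$ (assigning the sorted indices to the first $T$ positions is the most permissive matching). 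So I must build $(n_i)$ that no representation of $m$ can fit.

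The arithmetic heart is the following cancellation estimate, where transcendence of $r$ enters. \emph{Claim:} for every $k\in\N$ and $C>0$ there is $N_0=N_0(k,C)$ such that every nonzero sum $V=\sum_{s=1}^{p}c_sa_{l_s}$ with $l_1>\dots>l_p$, $\sum_s|c_s|\le k$, and top index $l_1\ge N_0$ satisfies $|V|>C$. I would argue by contradiction: given configurations with $l_1\to\infty$ but $|V|\le C$, pass to a subsequence along which $p$, the coefficients $c_s$, and all the gaps $l_s-l_{s+1}$ that remain bounded become constant, while the remaining gaps tend to $\infty$. This partitions the indices into clusters separated by arbitrarily large gaps. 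Writing $W=\sum_{s\le t}c_sa_{l_s}$ for the top cluster, the asymptotics $a_{l_s}/a_{l_1}\to r^{-(l_1-l_s)}$ (valid because within a cluster the exponents $l_1-l_s$ are bounded and $a_{n+1}/a_n\to r$) give $W/a_{l_1}\to\Pi:=\sum_{s\le t}c_sr^{-(l_1-l_s)}$. Since the exponents are distinct non-negative integers and $c_1\ne 0$, clearing denominators turns $\Pi=0$ into a nontrivial integer polynomial relation for $r$, which transcendence forbids; hence $\Pi\ne 0$. Everything below the first large gap has top index $l_{t+1}$ with $l_1-l_{t+1}\to\infty$, so it contributes $o(|a_{l_1}|)$, and therefore $|V|\ge\tfrac12|\Pi|\,|a_{l_1}|\to\infty$, contradicting $|V|\le C$. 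This proves the Claim.

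With the Claim in hand the theorem follows quickly. Fix $m\ne 0$ and choose $(n_i)$ strictly increasing with $n_i\ge N_0(i,|m|)$. If $m\in V_{(n_i)}$, take a fitting representation with total multiplicity $T$ and staggered indices $\mu_1\le\dots\le\mu_T$, $\mu_i\ge n_i$. Since $m\in[n_1,\dots,n_k]$ for some $k$ we have $T\le k$; its top index is $\mu_T\ge n_T\ge N_0(T,|m|)$, and $\sum_s|c_s|=T$. Applying the Claim with parameters $(T,|m|)$ gives $|m|=|V|>|m|$, a contradiction. Hence $m\notin V_{(n_i)}$, so $\bigcap_{(n_i)}V_{(n_i)}=\{0\}$, the topology $\Graev{a_n}$ is Hausdorff, and $(a_n)$ is a $T$-sequence in $\Z$.

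The main obstacle is the Claim, that is, controlling cancellation in bounded-multiplicity signed sums of far-out terms. The slogan ``a fixed nonzero value would force an algebraic relation on $r$'' is clean, but making it rigorous is delicate precisely because the gaps between consecutive indices need not be bounded: one cannot pass directly to a polynomial limit, and the clustering device (isolating a dominant top block with bounded internal exponents while discarding a geometrically smaller tail) is exactly what reconciles the finite limit $r$ with widely spread indices. The secondary, bookkeeping difficulty is the precise translation between membership in $V_{(n_i)}$ and the staggered representations, and it is there that the rapid growth of $(n_i)$ is spent.
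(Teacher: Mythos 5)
The paper itself contains no proof of this proposition: it is quoted directly from Protasov and Zelenyuk \cite[Theorem 2.2.3]{ProZel1999}, so the only comparison available is with the approach of that monograph. Your argument is correct and essentially self-contained, and it takes a genuinely different technical route. Protasov and Zelenyuk verify that specific sequences are $T$-sequences via a general criterion phrased through the sets $A(k,m)$ (for every $g\neq 0$ and every $k$ there exists $m$ with $g\notin A(k,m)$), whose sufficiency is itself a nontrivial theorem. You instead work directly with the sets $V_{(n_i)}$: the staggered-matching description of membership (a representation by $T$ unit terms with sorted indices $\mu_1\leq\dots\leq\mu_T$ must satisfy $\mu_i\geq n_i$, which is correct for increasing $(n_i)$ by the greedy/Hall argument you indicate) combined with a cancellation estimate in which \emph{only the top index} is required to be large. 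That strengthening is the key move: because your Claim tolerates arbitrarily small lower indices, a $T$-term representation is killed by $\mu_T\geq n_T\geq N_0(T,|m|)$ alone, and you never need to control translates $g-h$ by partial sums --- exactly the complication that the general criterion is designed to absorb. The proof of the Claim itself is sound: extraction of a subsequence making $p$, the coefficients and the bounded gaps constant; convergence of the normalized top cluster to $\Pi=\sum_{s\le t}c_s r^{-(l_1-l_s)}$; transcendence precluding $\Pi=0$ because clearing denominators produces a nonzero integer polynomial with distinct exponents; and geometric domination making everything below the first large gap $o(\abs{a_{l_1}})$.

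Two small repairs are needed. First, your parenthetical justification of $\abs{r}>1$ is false as stated: distinct integers can perfectly well have ratio limit of modulus one (take $a_n=n$). What is true is that $r$ is real; $\abs{r}<1$ would force $\abs{a_n}\to 0$ and hence repetition of terms; and $\abs{r}=1$ would give $r=\pm 1$, which is algebraic. So transcendence itself is what yields $\abs{r}>1$, and this inequality is genuinely needed for the $o(\abs{a_{l_1}})$ bound on the tail, so the fix should be made explicit. Second, the neighbourhood-basis proposition as stated in this paper presupposes that $(a_n)$ is already a $T$-sequence, so invoking it verbatim would be circular; you need the (true, and established in \cite{ProZel1999}) fact that for an \emph{arbitrary} sequence the sets $V_{(n_i)}$ form a basis at $0$ of a possibly non-Hausdorff group topology $\Graev{a_n}$, finest among those in which $a_n\to 0$, with Hausdorffness equivalent to $\bigcap_{(n_i)}V_{(n_i)}=\{0\}$. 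With those two adjustments your proof stands.
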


Let $\b$ be a $D$-sequence. It is clear (by Lemma \ref{bConvergeEnLambdab}) that $\b$ is a $T$-sequence. Hence, the topology $\Graev{b_n}$ can be constructed on $\Z$. By Theorem \ref{TopologiasDeProtasovSonCompletas}, it is a complete topology.

The following properties of $\Z_{\{a_n\}}$  are interesting by themselves. The first one solves a problem by Malykhin and the second one states that the dual group is metrizable and a $k$-space.

\begin{remark}\label{kW}

Let $(a_n)$ be a $T$-sequence on $\Z$. Then:

\begin{itemize}

\item[(1)]   $\Z_{\{a_n\}}$ is sequential, but not Frechet-Urysohn (\cite[Theorem 7 and Theorem 6]{ProZel1990}).

\item[(2)] $\Z_{\{a_n\}}$ is a $k_{\omega}$-group (\cite[Corollary 4.1.5]{ProZel1999} ).
\end{itemize}
\end{remark}

We prove now that, for a $D$-sequence $\b$ with bounded ratios, the locally quasi-convex modification of $\Graev{b_n}$ is $\lambdab$. This fact is also proved in \cite[Proposition 2.5]{DikGabTar2014}, but we give here a direct proof. First we study some numerical results in the group $\T$:

\begin{lemma}\label{lema1}

Let $k\in\T$ and $m\in\N$ satisfy $k,2k,\dots,mk\in\T_+$. If $mk\in\T_n$, then $k\in\T_{nm}$.

\end{lemma}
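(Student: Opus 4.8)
The plan is to show that the hypothesis $k,2k,\dots,mk\in\T_+$ forces the multiples of $k$ to behave like genuine real multiples, with no "wraparound" modulo $1$, and then to read off the desired bound. Write $k=x+\Z$ with the canonical representative $x\in(-1/2,1/2]$; since $k\in\T_+$ we have $x\in[-1/4,1/4]$. I would then prove by induction on $j$ that for every $j\in\{1,\dots,m\}$ the \emph{true} real multiple $jx$ is the canonical representative of $jk$ and satisfies $jx\in[-1/4,1/4]$.

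The engine of the induction is the elementary observation that if $y\in[-1/2,1/2]$ and $y+\Z\in\T_+$, then in fact $y\in[-1/4,1/4]$: any nonzero integer translate $y-\ell$ of such a $y$ lies outside $[-1/2,1/2]$ (indeed $y-1\in[-3/2,-1/2]$ and $y+1\in[1/2,3/2]$), hence cannot lie in $[-1/4,1/4]$, so the representative of $y+\Z$ in $[-1/4,1/4]$ must be $y$ itself. The base case $j=1$ is exactly the choice of $x$. For the inductive step, write $(j+1)x=jx+x$; by the inductive hypothesis $jx\in[-1/4,1/4]$ and $x\in[-1/4,1/4]$, so $(j+1)x\in[-1/2,1/2]$, and $(j+1)x$ represents $(j+1)k$. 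Since $(j+1)\le m$ we are given $(j+1)k\in\T_+$, so the observation applies and yields $(j+1)x\in[-1/4,1/4]$, completing the induction.

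Applying this at $j=m$ shows that $mx\in[-1/4,1/4]$ is the canonical representative of $mk$. Now the hypothesis $mk\in\T_n$ means $mx+\Z\in[-1/(4n),1/(4n)]+\Z$, and by the same no-wraparound reasoning (any nonzero integer shift of $mx\in[-1/4,1/4]$ leaves $[-1/2,1/2]$ and so misses $[-1/(4n),1/(4n)]\subseteq[-1/4,1/4]$) we conclude $\abs{mx}\le 1/(4n)$. Dividing by $m$ gives $\abs{x}\le 1/(4nm)$, i.e. $k=x+\Z\in\T_{nm}$, as required.

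The only delicate point — and the single place the full strength of the hypothesis is used — is the justification that the partial multiples never cross the $\pm 1/2$ boundary; this is precisely what requiring \emph{all} of $k,2k,\dots,mk$ to lie in $\T_+$ guarantees, and it is what allows the multiplicative passage from $1/(4n)$ to $1/(4nm)$. Boundary cases at $\pm 1/2$ cause no trouble, since $1/2+\Z\notin\T_+$, so no partial sum can equal the excluded endpoint while remaining in $\T_+$.
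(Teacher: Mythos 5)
Your proof is correct, but it is organized differently from the paper's. The paper argues by contradiction: writing $k=z+y$ with $z\in\Z$ and $\abs{y}\leq\frac{1}{4}$, it assumes $k\notin\T_{nm}$, so that $\frac{1}{4nm}<\abs{y}$ and hence $\frac{1}{4n}<\abs{my}$; since $mk\in\T_n$, the multiple $my$ must then have wrapped past a nonzero integer, forcing $\abs{my}\geq 1-\frac{1}{4n}\geq\frac{3}{4}$, and because consecutive multiples differ by $\abs{y}\leq\frac{1}{4}$, a discrete intermediate-value argument produces some $N\leq m$ with $\frac{1}{4}<\abs{Ny}<\frac{3}{4}$, i.e. $Nk\notin\T_+$, contradicting the hypothesis. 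You instead rule out wraparound directly: your induction shows that for every $j\leq m$ the real multiple $jx$ stays in $\left[-\frac{1}{4},\frac{1}{4}\right]$ and is the canonical representative of $jk$, after which $mk\in\T_n$ gives $\abs{mx}\leq\frac{1}{4n}$ and division by $m$ finishes. Both arguments exploit the same phenomenon --- the hypothesis $k,2k,\dots,mk\in\T_+$ prevents the multiples from crossing $\pm\frac{1}{2}$ --- but yours establishes a stronger invariant (the representative of $jk$ is literally $jx$ for every $j\leq m$), which makes the conclusion a one-line division and yields a reusable linearization statement; the paper's version is shorter on the page but leaves the intermediate-value step implicit and, incidentally, contains a typo (its bound $N\leq k$ should read $N\leq m$, since $k$ is an element of $\T$, not an integer). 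Your attention to the boundary cases (the excluded point $\frac{1}{2}+\Z$ and the half-open fundamental domain) is careful and closes the only gap such an induction could have.
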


\begin{proof}

Write $k=z+y$ where $z\in\Z$ and $y\in\left(-\frac{1}{2},\frac{1}{2}\right]$. Since $k\in\T_+$, we have $\abs{y}\leq\frac{1}{4}$.

Suppose that $k\notin\T_{nm}$. This means that $\frac{1}{4nm}<\abs{y}\leq\frac{1}{4}$. Now, $mk=mz+my$. Since $\frac{1}{4n}<\abs{my}\leq\frac{m}{4}$ and $mk\in\T_n$, we have that $\abs{my}\geq 1-\frac{1}{4n}$. Hence, there exists $N\leq k$ such that $\frac{1}{4}<\abs{Ny}<\frac{3}{4}$. Consequently, $Nk\notin\T_+$.

\end{proof}

\begin{lemma}\label{lema2}

Let $k\in\T$ and let $m_1,m_2,\dots,m_n\in\N\setminus\{1\}$. If $$k,2k,3k,\dots,m_1k,2m_1k,\dots,m_1m_2k,\dots,\left(\prod_{i=1}^{n-1}m_i\right)k,2\left(\prod_{i=1}^{n-1}m_i\right)k,\dots,\left(\prod_{i=1}^{n}m_i\right)k\in\T_+,$$ then $$k\in\T_{\prod_{i=1}^{n}m_i}.$$

\end{lemma}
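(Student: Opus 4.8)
The plan is to iterate Lemma \ref{lema1}, peeling off one factor $m_j$ at a time. Write $P_0 := 1$ and $P_j := \prod_{i=1}^{j} m_i$ for $1 \le j \le n$, so that $P_j = P_{j-1} m_j$, and observe that the desired conclusion is exactly $k \in \T_{P_n}$. The induction will produce, step by step, the statement $P_j k \in \T_{P_n/P_j}$, running from $j = n$ down to $j = 0$; note that each index $P_n/P_j = m_{j+1}\cdots m_n$ is a genuine natural number, so each $\T_{P_n/P_j}$ is well defined.

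Before the induction I would make explicit the one piece of bookkeeping that makes everything work: the hypothesis lists, for each $j$, \emph{all} the multiples $P_{j-1}k,\, 2P_{j-1}k,\, \dots,\, m_j P_{j-1}k = P_j k$ as lying in $\T_+$. This is because the blocks of the long chain overlap at their endpoints — the first term $P_{j-1}k$ of the $j$-th block is the last term $m_{j-1}P_{j-2}k$ of the preceding block — so together the blocks cover every multiple $1, 2, \dots, m_j$ of $P_{j-1}k$, not merely the endpoints. This is precisely the data that Lemma \ref{lema1} consumes.

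With this in hand the induction is routine. The base case $j = n$ reads $P_n k \in \T_{P_n/P_n} = \T_1 = \T_+$, which is the final term of the hypothesis chain. For the inductive step, assume $P_j k \in \T_{P_n/P_j}$ and apply Lemma \ref{lema1} with $P_{j-1}k$ playing the role of $k$ and $m_j$ playing the role of $m$: the multiples $P_{j-1}k, 2P_{j-1}k, \dots, m_j P_{j-1}k$ all lie in $\T_+$ by the observation above, and $m_j(P_{j-1}k) = P_j k \in \T_{P_n/P_j}$ by the inductive hypothesis. Lemma \ref{lema1} therefore gives $P_{j-1}k \in \T_{(P_n/P_j)m_j} = \T_{P_n/P_{j-1}}$, since $P_{j-1} = P_j/m_j$. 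Taking $j = 0$ yields $k = P_0 k \in \T_{P_n/P_0} = \T_{P_n} = \T_{\prod_{i=1}^{n} m_i}$, as required.

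The only point needing genuine care — the main obstacle, such as it is — is the indexing in the overlap argument: one must verify that reading the hypothesis chain block by block really does supply every intermediate multiple $2P_{j-1}k, \dots, (m_j-1)P_{j-1}k$ inside $\T_+$, so that the hypotheses of Lemma \ref{lema1} are truly met at each application and not just at the endpoints. Once that is pinned down, the proof is purely formal, with the $\T$-smallness indices multiplying up exactly as the products $P_j$ dictate.
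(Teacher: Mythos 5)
Your proof is correct and follows essentially the same route as the paper: both iterate Lemma \ref{lema1} block by block from the top product downward, multiplying the smallness indices at each step. The paper phrases this as ``repeating the argument $n$ times'' while you package it as a formal downward induction on $j$ with the invariant $P_j k \in \T_{P_n/P_j}$, which is just a cleaner write-up of the identical argument.
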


\begin{proof}

Since $\left(\prod_{i=1}^{n-1}m_i\right)k,2\left(\prod_{i=1}^{n-1}m_i\right)k,\dots\left(\prod_{i=1}^{n}m_i\right)k\in\T_+$ and $\left(\prod_{i=1}^{n}m_i\right)k\in\T_1$, by Lemma \ref{lema1}, we have that $\left(\prod_{i=1}^{n-1}m_i\right)k\in\T_{m_n}$.

Since $\left(\prod_{i=1}^{n-2}m_i\right)k,2\left(\prod_{i=1}^{n-2}m_i\right)k,\dots\left(\prod_{i=1}^{n-1}m_i\right)k\in\T_+$ and $\left(\prod_{i=1}^{n-1}m_i\right)k\in\T_{m_n}$, by Lemma \ref{lema1}, we have that $\left(\prod_{i=1}^{n-2}m_i\right)k\in\T_{m_nm_{n-1}}$.

Repeating the argument $n$ times, we obtain that $k\in\T_{\prod_{i=1}^{n}m_i}$.

\end{proof}

\begin{lemma}\label{lema3}

Let $k\in\T$ and let $(m_i)\in\N^\N$ a sequence of natural numbers satisfying $m_i>1$ for all $i\in\N$. If $$k,2k,3k,\dots,m_1k,2m_1k,\dots,m_1m_2k,\dots,\left(\prod_{i=1}^{n-1}m_i\right)k,2\left(\prod_{i=1}^{n-1}m_i\right)k,\dots,\left(\prod_{i=1}^{n}m_i\right)k,\dots\in\T_+$$ then $k=0+\Z$.

\end{lemma}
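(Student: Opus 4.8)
The plan is to bootstrap the finite statement of Lemma \ref{lema2} into the infinite conclusion by a shrinking-neighborhood argument. The hypothesis here is exactly the hypothesis of Lemma \ref{lema2}, but extended to an infinite sequence of block conditions. So the first thing I would do is fix an arbitrary $n\in\N$ and observe that the initial segment of the displayed hypothesis, namely
$$k,2k,\dots,m_1k,\dots,\left(\prod_{i=1}^{n-1}m_i\right)k,\dots,\left(\prod_{i=1}^{n}m_i\right)k\in\T_+,$$
is precisely the hypothesis of Lemma \ref{lema2} for the finite tuple $m_1,\dots,m_n$. Applying Lemma \ref{lema2} then yields $k\in\T_{\prod_{i=1}^{n}m_i}$. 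Since $n$ was arbitrary, this holds for every $n\in\N$.

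Next I would write $M_n:=\prod_{i=1}^{n}m_i$ and use that $m_i>1$, i.e. $m_i\geq 2$, to get $M_n\geq 2^n$, so that $M_n\to\infty$. The conclusion will then follow from the fact that the neighborhoods $\T_{M_n}$ shrink to the single coset $0+\Z$: explicitly, $\T_{M_n}=\left\{x+\Z:x\in\left[-\frac{1}{4M_n},\frac{1}{4M_n}\right]\right\}$, and $\bigcap_{n}\T_{M_n}=\{0+\Z\}$.

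To finish I would argue by contradiction that $k\in\bigcap_n\T_{M_n}$ forces $k=0+\Z$. Suppose $k\neq 0+\Z$ and write $k=y+\Z$ with $y\in\left(-\frac{1}{2},\frac{1}{2}\right]$ and $y\neq 0$. Then $\abs{y}>0$, so for $n$ large enough we have $\frac{1}{4M_n}<\abs{y}$, whence $k\notin\T_{M_n}$, contradicting $k\in\T_{M_n}$ for all $n$. Therefore $k=0+\Z$.

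I do not expect any serious obstacle here: the entire content has already been packaged into Lemma \ref{lema2}, so the only work is recognizing that each finite truncation of the hypothesis triggers that lemma and then passing to the limit. The one point that deserves a sentence of care is the passage from "$k\in\T_{M_n}$ for all $n$" to "$k=0+\Z$," since $\T_{M_n}$ is a set of cosets rather than of real numbers; choosing the representative $y\in\left(-\frac12,\frac12\right]$ of $k$ and comparing $\abs{y}$ with $\frac{1}{4M_n}$ makes this rigorous.
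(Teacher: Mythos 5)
Your proposal is correct and follows exactly the paper's own argument: apply Lemma \ref{lema2} to each finite truncation to conclude $k\in\bigcap_{n}\T_{\prod_{i=1}^{n}m_i}$, and then observe that this intersection is $\{0+\Z\}$. The paper states this in two lines; your only addition is spelling out why the intersection is trivial (via $m_i\geq 2$ and the choice of representative in $\left(-\frac12,\frac12\right]$), which is a worthwhile but routine verification.
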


\begin{proof}

By Lemma \ref{lema2}, we have that $k\in\bigcap_{N>0}\T_{\prod_{i=1}^{N}m_i}$. Hence, $k=0+\Z$.

\end{proof}

Now we can study the polar of the neighborhoods in the topology $\Graev{b_n}$.

\begin{proposition}

Let $\b$ be a $D$-sequence with bounded ratios and $V_{(n_i)}$ a basic neighborhood of $\Graev{b_n}$. Then, there exists $N\in\N$ such that $$A:=\{b_N,2b_N,\dots,b_{N+1},2b_{N+1},\dots,b_{N+2},2b_{N+2},\dots\}\in V_{(n_i)}.$$
As a consequence $V_{(n_i)}\tr$ are finite subsets of $\T$, for any sequence $(n_i)$.

\end{proposition}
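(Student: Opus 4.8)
The plan is to observe that every integer listed in $A$ is a \emph{short} sum of equal terms drawn from the $D$-sequence, where ``short'' is controlled by the bounded-ratios hypothesis, and then to read off the finiteness of the polar from Lemma \ref{lema3}. So I would split the argument into two halves: first the inclusion $A\subseteq V_{(n_i)}$, then the consequence about $V_{(n_i)}\tr$.

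First I would record that bounded ratios gives $L\in\N$ with $q_{n+1}\le L$ for all $n$, while $b_n\mid b_{n+1}$ together with $b_n\ne b_{n+1}$ forces $q_{n+1}\ge 2$. I would then describe $A$ explicitly as
$$A=\{\,j\,b_m : m\ge N,\ 1\le j\le q_{m+1}\,\},$$
since the block running from $b_m$ up to $b_{m+1}=q_{m+1}b_m$ consists precisely of the multiples $j b_m$ with $1\le j\le q_{m+1}$. Setting $N:=\max\{n_1,\dots,n_L\}$, a typical element $j b_m\in A$ has $m\ge N\ge n_i$ for every $i\le L$, hence $b_m\in A^*_{n_i}$ for $i=1,\dots,j$ (recall $j\le q_{m+1}\le L$). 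Writing $j b_m=\underbrace{b_m+\cdots+b_m}_{j}$ then exhibits $j b_m\in[n_1,\dots,n_j]\subseteq V_{(n_i)}$, giving $A\subseteq V_{(n_i)}$.

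For the consequence I would take any $\chi\in V_{(n_i)}\tr$ and put $k:=\chi(b_N)\in\T$ and $m_i:=q_{N+i}$. Since $\prod_{i=1}^{n}m_i=b_{N+n}/b_N$, the value $j\big(\prod_{i=1}^{n-1}m_i\big)k$ equals $\chi(j\,b_{N+n-1})$ with $j\,b_{N+n-1}\in A$, so from $\chi(A)\subseteq\T_+$ the entire chain appearing in the hypothesis of Lemma \ref{lema3} lies in $\T_+$; moreover each $m_i\ge 2$. Lemma \ref{lema3} then forces $k=\chi(b_N)=0+\Z$, i.e.\ $b_N\,\chi(1)=0$ in $\T$. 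Hence $\chi(1)\in\Z(b_N)$, a set of exactly $b_N$ elements, so $V_{(n_i)}\tr$ has at most $b_N$ members and is therefore finite.

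The main obstacle is the bookkeeping in the first half: one must check that the successive blocks of $A$ genuinely correspond to sums of at most $L$ copies of a single $b_m$, and that the one cutoff $N=\max\{n_1,\dots,n_L\}$ simultaneously serves every block $m\ge N$. This is exactly where bounded ratios is indispensable, since without a uniform bound on $j$ no finite collection of the indices $n_i$ could suffice. Once $A\subseteq V_{(n_i)}$ is secured, the finiteness of the polar is a direct application of Lemma \ref{lema3}, the only delicate point being the index matching $m_i=q_{N+i}$ against the telescoping products $b_{N+n}/b_N$.
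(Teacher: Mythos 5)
Your proof is correct and follows essentially the same route as the paper: bounded ratios gives the uniform bound $L$, the cutoff $N$ is a maximum of $L$ of the terms of $(n_i)$ so that each element of $A$ is a sum of at most $L$ copies of a single $b_m$ with $m\ge N$ lying in the relevant sets $A^*_{n_i}$, and finiteness of the polar follows by applying Lemma \ref{lema3} to $k=\chi(b_N)$ with $m_i=q_{N+i}\ge 2$, forcing $\chi(1)\in\Z(b_N)$. The only cosmetic difference is that the paper selects the $L$ smallest terms of $(n_i)$ and works inside the single set $[n_1,\dots,n_L]$ (implicitly padding shorter sums with $0\in A^*_{n_i}$), whereas you use the first $L$ terms and place $jb_m$ directly in $[n_1,\dots,n_j]\subseteq V_{(n_i)}$.
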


\begin{proof}

Since $\b$ has bounded ratios, there exists $L\in\N$ such that $q_n\leq L$ for all $n\in\N$.

Let $n_1,n_2,\dots, n_L$ be the indexes satisfying that $n_i\leq n_j$ if $i\in\{1,2,\dots,L\}$ and $j\notin\{1,2,\dots,L\}$. Let $N:=\max\{n_1,n_2,\dots,n_L\}$. Then $b_N,b_{N+1},\dots\in A^*_{n_i}$ for $i\in\{1,2,\dots,L\}$ and $$A\subset \{b_N,2b_N,\dots, Lb_N,b_{N+1},2b_{N+1},\dots,Lb_{N+1},\dots\}\subset[n_1,n_2,\dots,n_L]\subset V_{(n_i)}.$$

Let $\chi\in V_{(n_i)}\tr$. Since $A\subset V_{(n_i)}$, it is clear that $V_{(n_i)}\tr\subset A\tr$. Define $k:=\chi(b_N)$. Then $\chi(A)=\{k,2k,\dots q_Nk, 2q_Nk,\dots,q_Nq_{N+1}k,\dots\}$. Since $\chi\in A\tr$, we have $\chi(A)\subset\T_+$. By Lemma \ref{lema3}, the fact $k=\chi(b_N)=0+\Z$ follows. Hence $\chi\in \Z(b_N)$ and $V_{(n_i)}\tr$ is finite.

\end{proof}

Finally, we can prove that for a $D$-sequence, $\b$, with bounded ratios, the locally quasi-convex modification of $\Graev{b_n}$ is $\lambdab$.

\begin{corollary}\label{DualesCoincidenAlgebraicamente}

Let $\b$ be a $D$-sequence with bounded ratios. Then $\Z_{\{b_n\}}^\wedge=\Zbinfty$.

\end{corollary}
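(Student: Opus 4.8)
The plan is to establish the two inclusions $\Zbinfty \subseteq \Z_{\{b_n\}}^\wedge$ and $\Z_{\{b_n\}}^\wedge \subseteq \Zbinfty$ separately, identifying throughout a character of $\Z$ with its value at $1$, so that $\Z^\wedge \cong \T$ and $\Zbinfty$ is viewed as the torsion subgroup $\{k/b_n+\Z\}$ of $\T$.

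For the inclusion $\Zbinfty \subseteq \Z_{\{b_n\}}^\wedge$ I would exploit the extremal nature of the Graev topology. By Lemma \ref{bConvergeEnLambdab} the sequence $\b$ converges to $0$ in $\lambdab$, and since $\Graev{b_n}$ is by construction the finest group topology on $\Z$ for which $b_n \to 0$, it follows at once that $\lambdab \leq \Graev{b_n}$. A coarser topology has a smaller dual group, so $(\Z,\lambdab)^\wedge \subseteq \Z_{\{b_n\}}^\wedge$; and since $(\Z,\lambdab)^\wedge = \Zbinfty$ (proved in \cite{AusBar2012}), the desired inclusion drops out. Equivalently, and more concretely, one checks directly that the character $\chi$ attached to $k/b_M$ satisfies $\chi(b_n) = 0+\Z$ for every $n \geq M$, whence $\chi(b_n) \to 0$ and $\chi$ is continuous.

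For the reverse inclusion $\Z_{\{b_n\}}^\wedge \subseteq \Zbinfty$ I would feed the preceding Proposition into the standard description of the dual as a union of polars. Recall that for any neighborhood basis $\mathcal{B}$ of $0$ one has $(\Z,\Graev{b_n})^\wedge = \bigcup_{V \in \mathcal{B}} V\tr$; applying this to the Graev basis $\{V_{(n_i)} : (n_i) \in \N^\N\}$ gives $\Z_{\{b_n\}}^\wedge = \bigcup_{(n_i)} V_{(n_i)}\tr$. The preceding Proposition supplies, for each sequence $(n_i)$, an index $N = N((n_i))$ together with the containment $A \subseteq V_{(n_i)}$, and its proof shows that any $\chi \in V_{(n_i)}\tr$ must kill $b_N$, i.e. $\chi \in \Z(b_N)$. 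Since $\Z(b_N) \subseteq \Zbinfty$ for every $N$, each polar lies in $\Zbinfty$, and hence so does their union.

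The two inclusions together yield the claimed equality. I do not expect a genuine obstacle in the corollary itself: all the real work has already been absorbed into the three numerical lemmas (Lemmas \ref{lema1}--\ref{lema3}) and the preceding Proposition, which together force every $\chi$ annihilating the ``tail comb'' $A$ to annihilate $b_N$. The only point requiring a moment's care is that the index $N$ produced by the Proposition depends on the chosen sequence $(n_i)$; this causes no trouble, because the conclusion $\Z(b_N) \subseteq \Zbinfty$ holds uniformly in $N$, so taking the union over all $(n_i)$ still lands inside $\Zbinfty$.
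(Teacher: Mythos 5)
Your proposal is correct and takes essentially the same approach as the paper: the inclusion $\Z_{\{b_n\}}^\wedge\subseteq\Zbinfty$ via the union-of-polars description $\Z_{\{b_n\}}^\wedge=\bigcup_{(n_i)}V_{(n_i)}\tr$ combined with the preceding Proposition (each polar lands in some $\Z(b_N)\subseteq\Zbinfty$), and the reverse inclusion via $b_n\rightarrow 0$ in $\lambdab$, hence $\lambdab\leq\Graev{b_n}$ by maximality of the Graev topology, hence $\Zbinfty=(\Z,\lambdab)^\wedge\subseteq\Z_{\{b_n\}}^\wedge$. You merely spell out details (such as the dependence of $N$ on $(n_i)$) that the paper leaves implicit.
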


\begin{proof}

Since $\Z_{\{b_n\}}=\bigcup_{(n_i)}V_{(n_i)}\tr$, we have that $\Z_{\{b_n\}}^\wedge\leq\Zbinfty$.

Since $\b\rightarrow0$ in $\lambdab$, $\lambdab\leq\Graev{b_n}$ and $\Z_{\{b_n\}}^\wedge\geq\Zbinfty$.

\end{proof}

\begin{theorem}

Let $\b$ a $D$-sequence with bounded ratios. Then the locally quasi-convex modification of $\Graev{b_n}$ is $\lambdab$.

\end{theorem}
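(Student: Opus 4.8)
The plan is to establish the two inequalities $\lambdab\le(\Graev{b_n})_{lqc}$ and $(\Graev{b_n})_{lqc}\le\lambdab$ separately, where I write $(\Graev{b_n})_{lqc}$ for the locally quasi-convex modification of $\Graev{b_n}$.

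For the first inequality I would check that $\lambdab$ is itself locally quasi-convex and coarser than $\Graev{b_n}$. Its basis $\mathcal{B}_\b=\{b_n\Z\}$ consists of open subgroups, and every open subgroup is quasi-convex (for $x\notin b_n\Z$ a character of the discrete quotient $\Z/b_n\Z$ sends $b_n\Z$ to $0\in\T_+$ and $x$ outside $\T_+$), so $\lambdab\in\LQC$. By Lemma \ref{bConvergeEnLambdab} one has $b_n\to0$ in $\lambdab$, and since $\Graev{b_n}$ is the finest group topology with $b_n\to0$, this gives $\lambdab\le\Graev{b_n}$. As $(\Graev{b_n})_{lqc}$ is by definition the \emph{finest} locally quasi-convex topology coarser than $\Graev{b_n}$, maximality forces $\lambdab\le(\Graev{b_n})_{lqc}$.

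For the reverse inequality I would invoke the standard description of the modification: a neighborhood basis of $0$ for $(\Graev{b_n})_{lqc}$ is given by the quasi-convex hulls (bipolars) $V_{(n_i)}\tr\tl$ of the basic $\Graev{b_n}$-neighborhoods $V_{(n_i)}$, the polars being computed in the duality with $\Z_{\{b_n\}}^\wedge=\Zbinfty$ (this is legitimate because the modification does not change the dual group, by the Lemma equating $(G,\tau)^\wedge$ and $(G,(\tau)_{lqc})^\wedge$). Here, for $A\subseteq\Zbinfty$, I write $A\tl=\{x\in\Z:\chi(x)\in\T_+\text{ for all }\chi\in A\}$. The Proposition immediately preceding the theorem shows that each $V_{(n_i)}\tr$ is finite and, more precisely, contained in $\Z(b_N)$ for a suitable $N=N((n_i))$. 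Since taking prepolars reverses inclusions, $V_{(n_i)}\tr\tl\supseteq\Z(b_N)\tl\supseteq b_N\Z$, the last containment holding because every character in $\Z(b_N)$ annihilates $b_N\Z$ (if $x=b_Nm$ and $\chi=\tfrac{k}{b_N}+\Z$, then $\chi(x)=km+\Z=0\in\T_+$). Thus every basic $(\Graev{b_n})_{lqc}$-neighborhood of $0$ contains the $\lambdab$-open subgroup $b_N\Z$, which means exactly that $(\Graev{b_n})_{lqc}\le\lambdab$. Combining the two inequalities yields $(\Graev{b_n})_{lqc}=\lambdab$.

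I expect the only delicate point to be bookkeeping rather than substance: one must keep the direction of the polar operations straight and make sure the polars are taken against the correct dual group $\Zbinfty$. All the genuine work—controlling $V_{(n_i)}\tr$ via Lemmas \ref{lema1}–\ref{lema3} and Corollary \ref{DualesCoincidenAlgebraicamente}—has already been carried out in the preceding Proposition, so the theorem is essentially a repackaging of that result together with the bipolar description of the modification.
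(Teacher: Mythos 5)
Your proof is correct, and it closes the argument by a genuinely different route than the paper, although both rest on the same key input, namely the preceding Proposition showing $V_{(n_i)}\tr\subset\Z(b_N)$, and on the same underlying description of the locally quasi-convex modification (uniform convergence on equicontinuous subsets of the dual, equivalently the bipolar description you invoke). The paper's proof goes through precompactness: since all polars $V_{(n_i)}\tr$ are \emph{finite}, the equicontinuous subsets of $\Z_{\{b_n\}}^\wedge=\Zbinfty$ are finite, so $\left(\Graev{b_n}\right)_{lqc}$ is the topology of pointwise convergence on $\Zbinfty$ and hence precompact; being compatible with $\Graev{b_n}$, and therefore having dual group $\Zbinfty$ by Corollary \ref{DualesCoincidenAlgebraicamente}, it must coincide with $\lambdab$, because a precompact topology is determined by its dual group (a Comfort--Ross type uniqueness principle, used tacitly). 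Your proof avoids that uniqueness principle altogether: you squeeze $\left(\Graev{b_n}\right)_{lqc}$ from below via the maximality of the modification (using that $\lambdab$ is locally quasi-convex, being linear, and coarser than $\Graev{b_n}$ since $b_n\to0$ in $\lambdab$ by Lemma \ref{bConvergeEnLambdab}), and from above via the explicit chain $V_{(n_i)}\tr\tl\supseteq\Z(b_N)\tl\supseteq b_N\Z$. The trade-off is that the paper's argument needs only the \emph{finiteness} of the polars and is shorter once the general theory of precompact topologies is granted, whereas yours needs the sharper containment $V_{(n_i)}\tr\subset\Z(b_N)$ (which the Proposition does provide) but is elementary and self-contained, with every step checkable by hand at the level of neighborhoods; your lower-bound inequality $\lambdab\le\left(\Graev{b_n}\right)_{lqc}$ is also made explicit, while in the paper it is absorbed into the compatibility argument.
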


\begin{proof}

Since the polar sets of the basic neighborhoods of $\Graev{b_n}$ are finite, the equicontinuous subsets in the dual group are finite as well. Hence the locally quasi-convex modification of $\Graev{b_n}$ is precompact. Since $\Graev{b_n}$ and its locally quasi-convex modification are compatible topologies, it follows that $\left(\Graev{b_n}\right)_{lqc}=\lambdab$.

\end{proof}

\begin{corollary}\label{GraevNoLQC}

Let $\b$ a $D$-sequence with bounded ratios. Then $\Graev{b_n}$ is not locally quasi-convex.

\end{corollary}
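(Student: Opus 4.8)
The plan is to use the elementary principle that a group topology $\tau$ is locally quasi-convex if and only if it coincides with its own locally quasi-convex modification $(\tau)_{lqc}$. This is immediate from the definition of $(\tau)_{lqc}$ as the finest locally quasi-convex topology coarser than $\tau$: if $\tau$ is itself locally quasi-convex, then $\tau$ is an admissible competitor and, being the finest such, equals $(\tau)_{lqc}$; the converse is trivial. Applying this to $\tau=\Graev{b_n}$ and invoking the immediately preceding theorem, which identifies $\left(\Graev{b_n}\right)_{lqc}$ with $\lambdab$, I would reduce the corollary to the single assertion $\Graev{b_n}\neq\lambdab$: for if $\Graev{b_n}$ were locally quasi-convex, we would obtain $\Graev{b_n}=\left(\Graev{b_n}\right)_{lqc}=\lambdab$, a contradiction.

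To separate the two topologies I would compare metrizability. The $\b$-adic topology $\lambdab$ admits the countable neighborhood basis $\mathcal{B}_\b=\{b_n\Z:n\in\N_0\}$ of Definition \ref{DefinitionBadicas}, so it is first countable and, being a Hausdorff group topology, metrizable. By contrast $\Z_{\{b_n\}}$ is a countable group, and the earlier corollary stating that $\Graev{g_n}$ is never metrizable on a countable group shows that $\Graev{b_n}$ is not metrizable. Hence $\Graev{b_n}\neq\lambdab$ and the contradiction is in hand. A completely parallel alternative would argue via completeness: $\Graev{b_n}$ is complete by Theorem \ref{TopologiasDeProtasovSonCompletas}, whereas $\lambdab$ is a non-discrete precompact topology on the countably infinite group $\Z$ and therefore not complete, since a complete precompact group is compact and an infinite compact group is uncountable.

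I expect no genuine obstacle, since the substantive content is already carried by the preceding theorem together with Corollary \ref{DualesCoincidenAlgebraicamente}; the corollary is a one-line consequence. The only point demanding a little care is the verification of the strict inequality $\lambdab<\Graev{b_n}$, and I would choose a distinguishing property---metrizability, or completeness versus precompactness---that is already recorded in the excerpt, so that the deduction closes immediately without any new computation.
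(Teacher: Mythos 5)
Your proposal is correct and takes essentially the same route as the paper: assume $\Graev{b_n}$ is locally quasi-convex, conclude $\Graev{b_n}=\left(\Graev{b_n}\right)_{lqc}=\lambdab$ via the preceding theorem, and derive a contradiction by exhibiting a property separating the two topologies. The paper separates them by completeness ($\Graev{b_n}$ is complete, $\lambdab$ is not), which is exactly your alternative argument; your primary argument via metrizability is an equally valid minor variant.
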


\begin{proof}

If $\Graev{b_n}$ were locally quasi-convex, then $\left(\Graev{b_n}\right)_{lqc}=\Graev{b_n}$, but $\left(\Graev{b_n}\right)_{lqc}$ is not complete and $\Graev{b_n}$ is complete.

\end{proof}

\begin{remark}

Although the examples in Corollary \ref{GraevNoLQC} are not locally quasi-convex, there exists examples of Graev-type topologies on $\Z$ which are locally quasi-convex. Indeed, in \cite{Gab2010} it is proved that there exist Graev-type topologies on $\Z$ which are reflexive (hence locally quasi-convex).

\end{remark}

\begin{remark}

Corollary \ref{GraevNoLQC} gives a family of examples of:

\begin{itemize}
\item Complete topologies whose locally quasi-convex modifications are not complete.

\item Non-metrizable topologies whose locally quasi-convex modifications are metrizable.
\end{itemize}

\end{remark}

\begin{proposition}\label{DualesCoincidenTopologicamente}

The dual group of $\Z_{\{b_n\}}$ coincides algebraically and topologically with the dual group of $(\Z,\lambdab)$; that is, the dual group of $\Z_{\{b_n\}}$ is $\Zbinfty$ endowed with the discrete topology.

\end{proposition}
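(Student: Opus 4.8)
The plan is to prove first that the dual of $\Z_{\{b_n\}}$ carries the discrete topology, and then to transfer this conclusion to $(\Z,\lambda_\mathbf{b})$ using the comparison $\lambda_\mathbf{b}\leq\Graev{b_n}$. The algebraic part is already available: $\Z_{\{b_n\}}^\wedge=\Zbinfty$ by Corollary \ref{DualesCoincidenAlgebraicamente}, and $(\Z,\lambda_\mathbf{b})^\wedge=\Zbinfty$ by the result of \cite{AusBar2012} quoted above. Hence the entire content is topological, namely that the compact-open topology on each dual is the discrete one.

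For $G:=\Z_{\{b_n\}}$ I would show that every compact subset of $G^\wedge$ is finite. Since $G$ is a $k_\omega$-group by Remark \ref{kW}, it is in particular a $k$-space, so the Ascoli theorem applies: a subset of $G^\wedge$ is relatively compact in the compact-open topology if and only if it is equicontinuous. Now if $H\subseteq G^\wedge$ is equicontinuous, then taking the neighborhood $\T_+$ of $0$ in $\T$ there is a neighborhood $V$ of $0$ in $G$ with $\chi(V)\subseteq\T_+$ for every $\chi\in H$; choosing a basic neighborhood $V_{(n_i)}\subseteq V$ and reversing inclusions under polars gives $H\subseteq V\tr\subseteq V_{(n_i)}\tr$, and this last polar is finite by the Proposition preceding Corollary \ref{DualesCoincidenAlgebraicamente}. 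Thus every equicontinuous subset of $G^\wedge$ is finite, and by Ascoli every compact subset of $G^\wedge$ is finite.

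To pass from here to discreteness I would use that the dual of a $k_\omega$-group is a $k$-space (indeed metrizable). In a Hausdorff $k$-space all of whose compact subsets are finite, every subset $A$ is closed: for each compact $K$ the set $A\cap K$ is finite, hence closed, so $A$ is closed by the defining property of a $k$-space. Therefore $G^\wedge$ is discrete, and combined with the algebraic identification this yields $\Z_{\{b_n\}}^\wedge=\Zbinfty$ with the discrete topology. (If one prefers to avoid the $k$-space argument, metrizability of $G^\wedge$ suffices as well, since a metrizable group whose compact subsets are all finite can have no nontrivial convergent sequence and is therefore discrete.)

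Finally I would transfer the statement to $\lambda_\mathbf{b}$. Because $b_n\to 0$ in $\lambda_\mathbf{b}$ by Lemma \ref{bConvergeEnLambdab} and $\Graev{b_n}$ is the finest group topology with $b_n\to 0$, we have $\lambda_\mathbf{b}\leq\Graev{b_n}$; the identity $(\Z,\Graev{b_n})\to(\Z,\lambda_\mathbf{b})$ is then continuous and dualizes to a continuous identity map $(\Z,\lambda_\mathbf{b})^\wedge\to(\Z,\Graev{b_n})^\wedge$. Consequently the topology of $(\Z,\lambda_\mathbf{b})^\wedge$ is finer than that of $(\Z,\Graev{b_n})^\wedge$, which we have shown to be discrete; being finer than a discrete topology, it is itself discrete. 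Since both duals have underlying group $\Zbinfty$ and both carry the discrete topology, they coincide algebraically and topologically, which is the assertion. The main obstacle is the middle step: rigorously concluding that the compact subsets of $G^\wedge$ are finite, which rests on invoking the Ascoli theorem for the $k_\omega$-group $G$ together with the finiteness of the polars $V_{(n_i)}\tr$, and then on the elementary but essential observation that a $k$-space (or metrizable group) whose compact subsets are all finite must be discrete.
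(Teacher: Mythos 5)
Your proof is correct, but it reaches the discreteness of $\Z_{\{b_n\}}^\wedge$ by a genuinely different mechanism than the paper. The paper's argument is: the dual is algebraically $\Zbinfty$ (Corollary \ref{DualesCoincidenAlgebraicamente}), hence countable; since $\Z_{\{b_n\}}$ is a $k_\omega$-group (Remark \ref{kW}(2)), its dual is metrizable \emph{and complete}; and a countable complete metrizable group is discrete by the Baire Category Theorem. So the paper never returns to the finite-polar computation -- countability, completeness and Baire do all the work -- and the topological coincidence with $(\Z,\lambdab)^\wedge$ is then read off from the known fact that the latter is discrete. You instead use the $k$-space property of $\Z_{\{b_n\}}$ to invoke an Ascoli-type theorem, feed in the finiteness of the polars $V_{(n_i)}\tr$ a second time to conclude that every compact subset of the dual is finite, and then obtain discreteness from metrizability (or the $k$-space property) of the dual, with no appeal to completeness or Baire. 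One caveat: the textbook Ascoli theorem only yields equicontinuity \emph{on compact sets}; the global equicontinuity you need (a single neighborhood $V$ with $\chi(V)\subset\T_+$ for all $\chi$ in the compact set) is the group-theoretic refinement valid for $k$-space groups, which is precisely the statement that $\alpha_G$ is continuous for such groups -- a true and standard result (see \cite{tesislydia}), but it should be cited in that form rather than as classical Ascoli. Your route buys two things: it recycles the paper's own numerical work (the finite polars) in place of the completeness of duals of $k_\omega$-groups, and your final transfer step ($\lambdab\leq\Graev{b_n}$ dualizes to a continuous bijection onto a discrete group) actually \emph{proves} that $(\Z,\lambdab)^\wedge$ is discrete rather than quoting it from \cite{AusBar2012}. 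The paper's route is shorter, needing only countability of the dual rather than finiteness of its equicontinuous subsets.
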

\begin{proof}

 From Corollary \ref{DualesCoincidenAlgebraicamente} the dual group of $\Z_{\{b_n\}}$ is $\Z_{\{b^n\}}^\wedge=\Zbinfty$. Item $(2)$ in Remark \ref{kW} implies that $\Z_{\{b_n\}}^\wedge$ is metrizable and complete. By Baire Category Theorem and since $\Z_{\{b_n\}}^\wedge$ is countable, we get that it is discrete. Hence, it coincides topologically with $(\Z,\lambdab)^\wedge$.

\end{proof}

Now we answer the question whether $\Z_{\{b_n\}}$ is $\MAP$-Mackey.

\begin{proposition}

The topology $\Z_{\{b_n\}}$ is not $\MAP$-Mackey.

\end{proposition}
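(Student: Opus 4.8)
The plan is to refute $\MAP$-Mackeyness by producing a single maximally almost periodic topology that is compatible with $\Graev{b_n}$ but is \emph{not} coarser than it; since the $\MAP$-Mackey topology would have to be the maximum of the compatible $\MAP$-topologies, one such non-dominated competitor already settles the question. The natural competitor is $\Gammab$.

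First I would check that the statement is even meaningful, i.e.\ that $\Graev{b_n}$ lies in the class over which the Mackey supremum is formed. By Corollary \ref{DualesCoincidenAlgebraicamente} its dual group is $\Zbinfty$, and since $\Zbinfty$ separates the points of $\Z$, the group $\Z_{\{b_n\}}$ is maximally almost periodic; thus $\Graev{b_n}\in\mathcal{C}_{\MAP}(\Z,\Graev{b_n})$ and asking whether it is $\MAP$-Mackey is legitimate. Next I would verify that $\Gammab$ belongs to the same compatible class: by Lemma \ref{LemaDualContable} one has $(\Z,\Gammab)^\wedge=\Zbinfty=(\Z,\Graev{b_n})^\wedge$, so $\Gammab$ is compatible with $\Graev{b_n}$; and being the supremum of the locally quasi-convex topologies $\tauc$ with $\mathbf{c}\in\Dinfty(\b)$, it is itself locally quasi-convex and hence $\MAP$. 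Therefore $\Gammab\in\mathcal{C}_{\MAP}(\Z,\Graev{b_n})$.

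The decisive step is to show $\Gammab\not\leq\Graev{b_n}$, and here I would exploit the opposite behaviour of the sequence $(b_n)$ in the two topologies. In $\Graev{b_n}$ we have $b_n\to 0$, since $\Graev{b_n}$ is by construction the finest group topology in which $b_n\rightarrow 0$. On the other hand, by Theorem \ref{mupNoTieneSucesionesConvergentes} the topology $\Gammab$ has no nontrivial convergent sequences, so $b_n\nrightarrow 0$ in $\Gammab$. If $\Gammab\leq\Graev{b_n}$ held, then the coarser topology would inherit every $\Graev{b_n}$-convergent sequence, forcing $b_n\to 0$ in $\Gammab$ as well, a contradiction. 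Hence $\Gammab$ is a $\MAP$-topology compatible with $\Graev{b_n}$ that is not dominated by it, so $\Graev{b_n}$ is not the maximum of $\mathcal{C}_{\MAP}(\Z,\Graev{b_n})$ and therefore is not $\MAP$-Mackey.

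I expect no serious analytic obstacle: the substantive input, namely the absence of nontrivial convergent sequences in $\Gammab$, is already supplied by Theorem \ref{mupNoTieneSucesionesConvergentes}, so no further numerical work is needed. The only point requiring care is the bookkeeping attached to the definition of $\MAP$-Mackey: one must confirm both that $\Gammab$ genuinely sits in $\mathcal{C}_{\MAP}(\Z,\Graev{b_n})$ (compatibility together with maximal almost periodicity) and that it is the failure of \emph{domination} $\Gammab\leq\Graev{b_n}$, rather than mere incomparability, that destroys the maximum property. Getting the direction of the topology inequality right in the convergence argument is the one spot where a sign error could creep in.
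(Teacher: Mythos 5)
Your proposal is correct and follows exactly the paper's own argument: the paper also refutes $\MAP$-Mackeyness by observing that the compatible topology $\Gammab$ would have to satisfy $\Gammab\leq\Graev{b_n}$, which is impossible since $b_n\rightarrow 0$ in $\Graev{b_n}$ while $\Gammab$ has no nontrivial convergent sequences (Theorem \ref{mupNoTieneSucesionesConvergentes}). Your write-up merely makes explicit the bookkeeping the paper leaves implicit, namely that $\Gammab$ is $\MAP$ and compatible via Lemma \ref{LemaDualContable} and Corollary \ref{DualesCoincidenAlgebraicamente}.
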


\begin{proof}

If $\Graev{b_n}$ were Mackey, the condition $\Gammab\leq\Graev{b_n}$ should hold. But, the topology $\Gammab$ has no convergent sequences and $b_n\rightarrow 0$ in $\Graev{b_n}$.

\end{proof}

\begin{openquestion}

Let $(g_n)$ a $T$-sequence in $\Z$. If $\Graev{g_n}$ is locally quasi-convex, is $\Graev{g_n}$ the Mackey Topology?

\end{openquestion}

\end{document}